\newtheorem{theorem}{Theorem}
\newtheorem{lemma}[theorem]{Lemma}
\newtheorem{remark}[theorem]{Remark}
\newtheorem{corollary}[theorem]{Corollary}
\begin{document}
\title{Exponential quadrature rules without order reduction}
\author{
{\sc  B. Cano \thanks{Corresponding author. Email: bego@mac.uva.es} } \\ \small
IMUVA, Departamento de Matem\'atica Aplicada,\\ \small Facultad de Ciencias, Universidad de
Valladolid,\\ \small Paseo de Bel\'en 7, 47011 Valladolid,\\ \small Spain \\
{\sc and}\\
{\sc M. J. Moreta}\thanks{Email:mjesusmoreta@yahoo.es} \\
\small IMUVA,
Departamento de Fundamentos del Análisis Económico I, \\ \small
Facultad de Ciencias Económicas y Empresariales, Universidad
Complutense de Madrid, \\[2pt] \small Campus de Somosaguas, Pozuelo de Alarcón,
28223 Madrid, \\ \small Spain.
}
\date{}

\maketitle

\begin{abstract}
In this paper a technique is suggested to integrate linear initial boundary value problems with exponential quadrature rules in such a way that the order in time is as high as possible. A thorough error analysis is given for both the classical approach of integrating the problem firstly in space and then in time and of doing it in the reverse order in a suitable manner. Time-dependent boundary conditions are considered with both approaches and full discretization formulas are given to implement the methods once the quadrature nodes have been chosen for the time integration and a particular (although very general) scheme is selected for the space discretization. Numerical experiments are shown which corroborate that, for example, with the suggested technique, order $2s$ is obtained when choosing the $s$ nodes of  Gaussian quadrature rule.

\end{abstract}



\section{Introduction}
Due to the recent development and improvement of Krylov methods \cite{grimm, niesen}, exponential quadrature rules have become a valuable tool to integrate linear initial value partial differential problems \cite{HO2}. This is because of the fact that the linear and stiff part of the problem can be \lq exactly' integrated in an efficient way through exponential operators. Moreover, when the source term is nontrivial, a variations-of-constants formula and the interpolation of that source term in several nodes leads to the appearance of some $\varphi_j$-operators, to which Krylov techniques can also be applied.

However, in the literature \cite{HO2}, these methods have always been applied and analysed either on the initial value problem or on the initial boundary value one with vanishing or periodic boundary conditions. More precisely, when the linear and stiff operator is the infinitesimal generator of a strongly continuous semigroup in a certain Banach space. In such a case, it has been proved \cite{HO2} that the exponential quadrature rule converges with global order $s$ if $s$ is the number of nodes being used for the interpolation of the source term.

There are no results concerning specifically how to deal with these methods when integrating the most common non-vanishing and time-dependent boundary conditions case. The only related reference is that of Lawson quadrature rules \cite{acr,acr2,lawson} which differ from these methods in the fact that, not only the source term is interpolated, but all the integrand which turns up in the variations-of-constants formula. In such a way, with the latter methods, $\{\varphi_j\}$-operators (with $j\ge 1$) do not turn up. Only exponential functions (those corresponding to $j=0$) are present. In \cite{acr} a thorough error analysis is given which studies the strong order reduction which turns up with Lawson methods even in the vanishing boundary conditions case unless some even more artificial additional vanishing boundary conditions are satisfied. Nevertheless, in \cite{acr2}, a technique is suggested to avoid that order reduction under homogeneous boundary conditions and to even tackle the time-dependent boundary case without order reduction. Moreover, the analysis there also includes the error coming from the space discretization.

The aim of this paper is to generalize that technique to the most common quadrature rules which are used in the literature and which also use $\{\varphi_j\}$-operators. Besides, we also include the error coming from the space discretization not only when avoiding order reduction but also with the classical approach. We will see that, with the technique which is suggested in this paper, we manage to get the  order of the classical quadrature interpolatory rule, which implies that, by choosing the $s$ nodes carefully, we can manage to get even order $2s$ in time.

The paper is structured as follows. Section 2 gives some preliminaries on the abstract framework in Banach spaces which is used for the time integration of the problem, on the definition of the exponential quadrature rules and on the general hypotheses which are used for the abstract space discretization. Then, Section 3 describes and makes a thorough error analysis of the classical method of lines, which integrates the problem firstly in space and then in time. Both vanishing and nonvanishing boundary conditions are considered there and several different results are obtained depending on the specific accuracy of the quadrature rule and on whether a parabolic assumption is satisfied. After that, the technique which is suggested in the paper to improve the order of accuracy in time is well described in Section 4. It consists of discretizing firstly in time with suitable boundary conditions and then in space. Therefore, the analysis is firstly performed on the local error of the time semidiscretization and then on the local and global error of the full scheme (\ref{Vh0}),(\ref{Vhij}),(\ref{ff}).
Finally, Section 5 shows some numerical results which corroborate the theoretical results of the previous sections.

\section{Preliminaries}
\label{prel}
As in \cite{acr2},  we consider the linear abstract initial boundary value problem in the complex Banach space $X$
\begin{eqnarray}
\label{laibvp}
\begin{array}{rcl}
u'(t)&=&Au(t)+f(t),\quad 0\le t \le T,\\
u(0)&=&u_0,\\
\partial u(t)&=&g(t),
\end{array}
\end{eqnarray}
where $D(A)$ is a dense subspace of $X$ and the linear operators $A$ and $\partial$ satisfy the following assumptions:
\begin{enumerate}
\item[(A1)] The boundary operator $\partial:D(A)\subset X\to Y$ is
onto, where $Y$ is another Banach space.

 \item[(A2)] Ker($\partial$) is dense in $X$ and
$A_0:D(A_0)=Ker(\partial)\subset X \to X$, the restriction of $A$
to Ker($\partial$), is the infinitesimal generator of a $C_0$-
semigroup $\{e^{tA_0}\}_{t\ge 0}$ in $X$, which type is denoted by
$\omega$, and we assume that it is negative.
\item[(A3)] If $z \in \mathbb{C}$ satisfies $ Re (z) >
\omega$ and $v \in Y$, then the steady state problem
\begin{eqnarray}
 Ax &=& zx, \nonumber\\
 \partial x&=&v,
\nonumber
\end{eqnarray}
possesses a unique solution denoted by $x=K(z)v$. Moreover, the linear operator
$K(z): Y \to D(A)$ satisfies
\begin{eqnarray}
 \| K(z)v\|_X \le  L\|v\|_Y,
\nonumber
\end{eqnarray}
where the constant $L$ holds for any $z$ such that $Re (z) >
\omega_0 > \omega$.
\end{enumerate}

As precisely stated in \cite{acr2,arti1}, with these assumptions, problem (\ref{laibvp}) is well-posed
in $BV/L^{\infty}$ sense. Moreover, as $\omega$ is negative,  the semigroup $e^{t A_0}$ decays exponentially when $t \to 0$ and the operator $A_0$ is invertible.

We also remind that, because of hypothesis (A2), $\{\varphi_j(tA_0)\}_{j=0}^{\infty}$ are
bounded operators for $t>0$, where
$\{\varphi_j\}$ are the standard functions which are used in exponential
methods \cite{HO2}:
\begin{eqnarray}
\varphi_j( t A_0)=\frac{1}{t^j} \int_0^t
e^{(t-\tau)A_0}\frac{\tau^{j-1}}{(j-1)!}d\tau. \label{varphi}
\end{eqnarray}
It is well-known that they can be calculated in a recursive way through the formula
\begin{eqnarray}
\varphi_{k+1}(z)=\frac{\varphi_k(z)-1/k!}{z}, \quad z \neq 0,
\qquad \varphi_{k+1}(0)=\frac{1}{(k+1)!}, \qquad \varphi_0(z)=e^z,
\label{recurf}
\end{eqnarray}
and that these functions are  bounded on the complex plane when
$\mbox{Re}(z)\le 0$.

We also assume that the solution of (\ref{laibvp}) satisfies that, for a natural number $p$,
\begin{eqnarray}
A^{p+1-j} u^{(j)} \in C([0,T],X), \quad 0\le j\le p+1.
\label{reg}
\end{eqnarray}
When $A$ is a differential space operator, this assumption implies that the time derivatives of the solution
are regular in space, but without imposing any restriction on the
boundary. Because of Theorem 3.1 in \cite{isaias0}, this assumption is
satisfied when the data $u_0$, $f$ and $g$ are regular and satisfy
certain natural compatibility conditions at the boundary. Moreover, as from
(\ref{laibvp}),
\begin{eqnarray}
u^{(j)}(t)=\sum_{l=0}^{j-1}
A^l f^{(j-1-l)}(t)+A^j u(t), \quad 0\le j \le p+1,\label{ujt}
\end{eqnarray}
the following crucial boundary values
\begin{eqnarray}
\partial A^j u(t)= g^{(j)}(t)-\sum_{l=0}^{j-1} \partial
A^l f^{(j-1-l)}(t), \quad 0\le j \le p+1,
\nonumber
\end{eqnarray}
can be calculated from the given data in problem (\ref{laibvp}).

We will center on  exponential quadrature rules
\cite{HO2} to time integrate (\ref{laibvp}).  When applied to a finite-dimensional linear
problem like
\begin{eqnarray}
U'(t) = B U(t)+F(t), \label{linfd}
\end{eqnarray}
where $B$ is a matrix, these rules
correspond  to interpolating $F$ in $s$ nodes $\{c_i\}_{i=1}^s$
in the integral in
the equality
\begin{eqnarray}
 U(t_{n+1})=e^{k B} U(t_n)+k\int_0^1
e^{k(1-\theta)B}F(t_n+\theta k)d\theta,
\label{vcf}
\end{eqnarray}
which is satisfied by the solutions of (\ref{linfd}) when
$t_{n+1}=t_n+k$. This yields
\begin{eqnarray}
U^{n+1}=e^{k B} U^n+ k \sum_{i=1}^s b_i(k B) F(t_n+ c_i k), \nonumber
\end{eqnarray}
with weights
\begin{eqnarray}
b_i(k B)=\int_0^1 e^{k(1-\theta)B} l_i(\theta) d \theta,
\nonumber
\end{eqnarray}
where $l_i$ are the Lagrange interpolation polynomials corresponding to the nodes $\{c_i\}_{i=1}^s$. We will define the values $\{a_{ij}\}_{i,j=1}^s$ in such a way that
\begin{eqnarray}
l_i(\theta)=a_{i,1}+a_{i,2}\theta+a_{i,3}\frac{\theta^2}{2}+\dots+a_{i,s}\frac{\theta^{s-1}}{(s-1)!}. \label{litheta}
\end{eqnarray}
From this,
$$
b_i(\tau B)=\int_0^1 e^{\tau(1-\theta)B} l_i(\theta) d \theta=\frac{1}{\tau}\int_0^{\tau} e^{(\tau-\sigma)B} l_i(\frac{\sigma}{\tau})d\sigma=\sum_{j=1}^s a_{i,j} \varphi_j(\tau B),
$$
for the functions $\varphi_j$ in (\ref{varphi}), and the final formula for the integration of (\ref{linfd}) is
\begin{eqnarray}
U^{n+1}=e^{k B} U^n+ k \sum_{i,j=1}^s a_{i,j}\varphi_j(k B) F(t_n+ c_i k). \label{qr}
\end{eqnarray}

We will consider an abstract spatial discretization  which satisfies the same hypotheses as in \cite{acr2} (Section 4.1) and which includes a big range of techniques. In such a way, for each parameter $h$ in a sequence $\{ h_j\}_{j=1}^{\infty}$ such that $h_j\to 0$, $X_h\subset X$ is a finite dimensional space which approximates $X$ when $h_j\to 0$ and the elements in $D(A_0)$ are approximated in a subspace $X_{h,0}$. The norm in $X_h$ is denoted by $\|\cdot \|_h$. The operator $A$ is then approximated by $A_h$, $A_0$ by $A_{h,0}$ and the solution of the elliptic problem
$$ A w =F, \qquad \partial w=g,$$
is approximated by $R_h w+Q_h g$, where $R_h w\in X_{h,0}$ is called the elliptic projection, $Q_h g\in X_h$ discretizes the boundary values and the following is satisfied:
\begin{eqnarray}
A_{h,0} R_h w+A_h Q_h g= L_h F,
\label{elipp}
\end{eqnarray}
for a projection operator $L_h:X\to X_{h,0}$. We will also use $P_h=L_h -L_h Q_h \partial$ and we remind part of hypothesis (H3) in \cite{acr2}, which  states that, for a subspace $Z$ of $X$ with norm $\|\cdot \|_Z$, whenever $u\in Z$,
\begin{eqnarray}
\|A_{h,0}(R_h -P_h)u\|_h \le \varepsilon_h \|u\|_Z,
\label{epsi}
\end{eqnarray}
for $\varepsilon_h$ decreasing with $h$ and, therefore, this gives a bound for the error in the space discretization of operator $A$.

Moreover, we will assume that this additional hypothesis is satisfied:
\begin{enumerate}
\item[(HS)] $\| A_{h,0}^{-1} A_h Q_h \|_h $ is bounded independently of $h$ for small enough $h$. Considering (\ref{elipp}), this in fact corresponds to a discrete maximum principle, which would be simulating the continuous maximum principle which is satisfied because of (A3) when $z=0$.
\end{enumerate}

\section{Classical approach: Discretizing firstly in space and then in time}
When considering vanishing boundary conditions in (\ref{laibvp}) (which has been classically done in the literature with exponential methods \cite{HO2}), discretizing first in space and then in time leads to the following semidiscrete problem in $X_{h,0}$:
\begin{eqnarray}
U_h'(t)&=& A_{h,0} U_h(t)+L_h f(t), \nonumber \\
U_h (0)&=& L_h u(0). \nonumber
\end{eqnarray}
When integrating this problem with an exponential quadrature rule which is based on $s$ nodes (\ref{qr}), the following scheme arises:
\begin{eqnarray}
U_h^{n+1}=e^{k A_{h,0}} U_h^n+ k \sum_{i,j=1}^s a_{i,j}\varphi_j(k A_{h,0}) L_h f(t_n+ c_i k). \label{qrvbd}
\end{eqnarray}
Denoting by $\rho_{h,n+1}$ to $U_h(t_{n+1})-\bar{U}_h^{n+1}$ where $\bar{U}_h^{n+1}$ is the result of applying (\ref{qrvbd}) from $U_h(t_n)$ instead of $U_h^n$; and $e_{h,n+1}$ to $U_h(t_{n+1})-U_h^{n+1}$ the following result follows:
\begin{theorem}
\label{teorcavbc}
Whenever $g(t)=0$ in (\ref{laibvp}), $u\in C([0,T],Z)$ and $f\in  C^{s}([0,T], X)$,
\begin{enumerate}
\item[(i)]
$\rho_{h,n}=O(k^{s+1})$,
\item[(ii)]
$e_{h,n}=O(k^s)$,
\item[(iii)]
$L_h u(t_n)- U_h^n=O(k^s+ \varepsilon_h)$.
\end{enumerate}
where the constants in Landau notation are independent of $k$ and $h$.
\end{theorem}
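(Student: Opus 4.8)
The plan is to treat the three claims in turn, each building on the previous one, following the standard template for exponential integrator error analysis but keeping careful track of the constants that depend on $h$.

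For part (i), I would start from the variation-of-constants representation of the exact semidiscrete solution between $t_n$ and $t_{n+1}$:
\begin{eqnarray}
U_h(t_{n+1})=e^{kA_{h,0}}U_h(t_n)+k\int_0^1 e^{k(1-\theta)A_{h,0}}L_h f(t_n+\theta k)\dd\theta,
\nonumber
\end{eqnarray}
and subtract the one-step map $\bar U_h^{n+1}$ obtained from (\ref{qrvbd}). Since $e^{kA_{h,0}}U_h(t_n)$ is common to both, the local error is exactly $k$ times the quadrature error for $\int_0^1 e^{k(1-\theta)A_{h,0}}L_h f(t_n+\theta k)\dd\theta$ against the interpolation of $L_h f(t_n+\cdot k)$ at the nodes $\{c_i\}$. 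The key point is that the interpolatory quadrature rule $\sum_i b_i(kA_{h,0})(\cdot)$ is exact on polynomials of degree $\le s-1$ in $\theta$; writing $g_n(\theta)=e^{k(1-\theta)A_{h,0}}L_h f(t_n+\theta k)$, a Peano-kernel / Taylor-with-integral-remainder argument gives $\rho_{h,n+1}=k\cdot O(k^s\,\sup\|\partial_\theta^s g_n\|_h)$. Because $\partial_\theta^s$ hitting $e^{k(1-\theta)A_{h,0}}$ produces factors $kA_{h,0}$ paired with the exponential, and $\|(\tau A_{h,0})^m e^{\tau A_{h,0}}\|_h$ is bounded uniformly for $\tau\ge 0$ by the exponential decay in (A2), only the $f$-derivatives survive, and one obtains $\rho_{h,n+1}=O(k^{s+1})$ with constant depending on $\|L_h f^{(s)}\|$ but uniform in $h$. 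The mild technical care here is to expand $e^{k(1-\theta)A_{h,0}}$ around $\theta=1$ rather than differentiating it, so as to keep all powers of $A_{h,0}$ absorbed into bounded operators $\varphi_j(kA_{h,0})$ and $e^{kA_{h,0}}$; this is the step I expect to require the most bookkeeping.

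For part (ii), I would use the standard Lady Windermere's fan / error accumulation argument. Writing $e_{h,n+1}=U_h(t_{n+1})-U_h^{n+1}$, the scheme is linear so $e_{h,n+1}=e^{kA_{h,0}}e_{h,n}+\rho_{h,n+1}$, hence by induction $e_{h,n}=\sum_{m=1}^{n} e^{(n-m)kA_{h,0}}\rho_{h,m}$. Using $\|e^{jkA_{h,0}}\|_h\le M$ uniformly (indeed with exponential decay, but boundedness suffices), $\|e_{h,n}\|_h\le M\,n\,\max_m\|\rho_{h,m}\|=M\cdot O(T/k)\cdot O(k^{s+1})=O(k^s)$, uniformly in $h$, using $e_{h,0}=0$.

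For part (iii), I would split $L_h u(t_n)-U_h^n = (L_h u(t_n)-U_h(t_n)) + (U_h(t_n)-U_h^n) = (L_h u(t_n)-U_h(t_n)) + e_{h,n}$. The second term is $O(k^s)$ by (ii). For the first term, the spatial semidiscretization error, I would use that in the vanishing-boundary case $u(t)\in\mathrm{Ker}(\partial)$ so $P_h$ and $R_h$ coincide on the relevant data up to the bound (\ref{epsi}): one compares $U_h(t)$ with $L_h u(t)=P_h u(t)$ via their defining equations, differentiates $w(t):=U_h(t)-P_h u(t)$ to get $w'(t)=A_{h,0}w(t)+\delta_h(t)$ with a defect $\delta_h(t)$ controlled by $\|A_{h,0}(R_h-P_h)u(t)\|_h\le\varepsilon_h\|u(t)\|_Z$ via (\ref{epsi}) together with (HS), and $w(0)=0$; Duhamel and the uniform bound on the semigroup then give $\|w(t)\|_h=O(\varepsilon_h)$ uniformly on $[0,T]$. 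Adding the two contributions yields $O(k^s+\varepsilon_h)$. The only real obstacle across the whole proof is the first one — making the $A_{h,0}$-powers in the local-error Taylor remainder disappear cleanly into bounded $\varphi_j$ operators rather than leaving behind an unbounded $A_{h,0}$ that would force a CFL-type restriction; everything after that is routine linear recursion and Duhamel.
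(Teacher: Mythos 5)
Your overall structure matches the paper's: the same variation-of-constants representation for the local error, the same linear error-accumulation recursion $e_{h,n+1}=e^{kA_{h,0}}e_{h,n}+\rho_{h,n+1}$ for (ii), and for (iii) the same splitting $L_h u(t_n)-U_h^n=(L_h u(t_n)-U_h(t_n))+e_{h,n}$ with a Duhamel argument for $w(t)=L_hu(t)-U_h(t)$ driven by (\ref{epsi}). Parts (ii) and (iii) are fine as written (your appeal to (HS) in (iii) is superfluous here: with $g=0$ the $Q_h$ terms are absent and the paper does not use it).

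The weak point is your justification of (i). You reduce to a Peano-kernel bound on $\partial_\theta^s g_n$ with $g_n(\theta)=e^{k(1-\theta)A_{h,0}}L_h f(t_n+\theta k)$ and then assert that the resulting factors $(\tau A_{h,0})^m e^{\tau A_{h,0}}$ are uniformly bounded ``by the exponential decay in (A2)''. That assertion is false in general: exponential decay of a $C_0$-semigroup does not control $\|\tau A_{h,0}e^{\tau A_{h,0}}\|_h$ uniformly in $h$; such smoothing is an analyticity/parabolicity property, and it is exactly the extra hypothesis (\ref{hparab}) that the paper must assume separately to obtain the sharper Theorem \ref{teorcavbcsp} (cf.\ Remark \ref{norma2p}). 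Your fallback of expanding the exponential into $\varphi_j(kA_{h,0})$'s would repair this, but the whole difficulty is self-inflicted: since only $f$ is interpolated (not the full integrand, as it would be for a Lawson method), the local error is exactly $k\int_0^1 e^{k(1-\theta)A_{h,0}}L_h\,[f(t_n+k\theta)-I(f(t_n+k\theta))]\,d\theta$, which is (\ref{loccd}), and one never needs to differentiate the exponential in $\theta$ at all: the interpolation error of $f$ alone is $O(k^s)$ in $X$ because $f\in C^s([0,T],X)$, and $e^{k(1-\theta)A_{h,0}}$ and $L_h$ enter only as uniformly bounded operator weights under the integral. That is the paper's one-line argument, and it avoids any CFL-type concern by construction.
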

\begin{proof}
(i) comes from the fact that the difference between $f(t_n+k \tau)$ and its interpolant $I(f(t_n+k \tau))$ in those nodes is $O(k^s)$. More explicitly,
by using (\ref{vcf}) and the definition of $\bar{U}_h^{n+1}$,
\begin{eqnarray}
 \hspace{0.5cm}
 \rho_{h,n+1} =U_h(t_{n+1})-\bar{U}_h^{n+1}=k \int_0^1 e^{k(1-\theta)A_{h,0}} L_h [f(t_n+k \theta)-I(f(t_n+k \theta))]d\theta. \label{loccd}
\end{eqnarray}
Now, taking into account hypotheses (H1)-(H2) in \cite{acr2}, $e^{k(1-\tau)A_{h,0}}$ and $L_h$ are bounded with $h$, and the result follows.

Then, (ii) is deduced from the classical argument for the global error once the local error is bounded.
Finally, (iii) comes from (ii) and the decomposition
$$L_h u(t_n)-U_h^n= [L_h u(t_n)- U_h(t_n)]+[U_h(t_n)-U_h^n],$$
by noticing that, for the first term, as $g=0$, it happens that
\begin{eqnarray}
L_h \dot{u}(t)-\dot{U}_h(t)&=& A_{h,0}(R_h u(t)-U_h(t))=A_{h,0}(L_h u(t)-U_h(t))+A_{h,0}(R_h u(t)-P_h u(t)), \nonumber \\
L_h u(0)-U_h(0)&=&0. \nonumber
\end{eqnarray}
Then, because of (\ref{epsi}), $L_h u(t)- U_h(t)=\int_0^t e^{(t-s) A_{h,0}}O(\varepsilon_h) ds= O(\varepsilon_h).$
\end{proof}
We also have this finer result, which implies global order $s+1$ under more restrictive hypotheses.
\begin{theorem}
\label{teorcavbcsp}
Let us assume that $g(t)=0$ in (\ref{laibvp}), $u$ belongs to $C([0,T],Z)$, $f$ to $C^{s+2}([0,T], X)$, the interpolatory quadrature rule which is based on $\{c_i\}_{i=1}^s$ integrates exactly polynomials of degree less than or equal to $s$ and this bound holds
\begin{eqnarray}
\|k A_{h,0} \sum_{r=1}^{n-1} e^{r k A_{h,0}}\|_h
\le C , \quad 0 \le n k \le T.
\label{hparab}
\end{eqnarray}
Then,
\begin{enumerate}
\item[(i)]
$A_{h,0}^{-1}\rho_{h,n}=O(k^{s+2})$,
\item[(ii)]
$e_{h,n}=O(k^{s+1})$,
\item[(iii)]
$L_h u(t_n)- U_h^n=O(k^{s+1}+\varepsilon_h)$.
\end{enumerate}
where the constants in Landau notation are independent of $k$ and $h$.
\end{theorem}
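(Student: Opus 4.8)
The plan is to mirror the proof of Theorem \ref{teorcavbc} but to exploit the extra exactness of the quadrature rule to gain one order in the local error, in the form of a bound on $A_{h,0}^{-1}\rho_{h,n}$ rather than on $\rho_{h,n}$ itself. Starting from the representation (\ref{loccd}),
\begin{eqnarray}
\rho_{h,n+1}=k\int_0^1 e^{k(1-\theta)A_{h,0}} L_h\bigl[f(t_n+k\theta)-I(f(t_n+k\theta))\bigr]d\theta, \nonumber
\end{eqnarray}
I would expand the interpolation error in a Taylor/Peano-kernel form around $t_n$. Because the quadrature rule integrates polynomials of degree $\le s$ exactly and has $s$ nodes, the interpolant reproduces the degree-$(s-1)$ Taylor polynomial of $f(t_n+k\theta)$ and, moreover, the leading degree-$s$ term of the interpolation error integrates against the constant weight to zero; hence the ``scalar part'' of $\rho_{h,n+1}$ is actually $O(k^{s+2})$. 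What remains at order $k^{s+1}$ is a term of the shape $k^{s+1}A_{h,0}^{-1}(\,\text{something bounded}\,)$ coming from integrating $k(1-\theta)A_{h,0}$-type factors against the degree-$s$ Taylor term, using $e^{k(1-\theta)A_{h,0}}-I = k(1-\theta)A_{h,0}\varphi_1(k(1-\theta)A_{h,0})$. Collecting this, one gets $A_{h,0}^{-1}\rho_{h,n+1}=O(k^{s+2})$, which is (i); the hypotheses $f\in C^{s+2}$ and the boundedness with $h$ of $e^{k(1-\theta)A_{h,0}}$, $L_h$ and the $\varphi_j$-operators (from (A2)) are exactly what makes the remainders uniform in $h$ and $k$.

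For (ii), I would run the usual telescoping $e_{h,n}=\sum_{r=1}^{n} e^{(n-r)kA_{h,0}}\rho_{h,r}$ for the global error, but now split each $\rho_{h,r}$ as $\rho_{h,r}=A_{h,0}(A_{h,0}^{-1}\rho_{h,r})$ with $A_{h,0}^{-1}\rho_{h,r}=O(k^{s+2})$ uniformly. Then
\begin{eqnarray}
e_{h,n}=\sum_{r=1}^{n} e^{(n-r)kA_{h,0}} A_{h,0}\,(A_{h,0}^{-1}\rho_{h,r}) \nonumber
\end{eqnarray}
and one factors out $k A_{h,0}\sum_{j} e^{jkA_{h,0}}$, whose norm is bounded by $C$ precisely by hypothesis (\ref{hparab}). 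Since there are $O(1/k)$ summands each carrying $A_{h,0}^{-1}\rho_{h,r}=O(k^{s+2})$, the geometric-sum bound (\ref{hparab}) absorbs one factor of $k^{-1}$ and one $A_{h,0}$, leaving $e_{h,n}=O(k^{s+1})$. Some care is needed to handle the endpoint term $r=n$ (where the exponent vanishes) and the $r=1$ term, but these are lower-order boundary contributions of the Abel summation and are handled exactly as in the standard parabolic-order-reduction-avoidance argument; I would also note that the smoothness assumption on $f$ has been raised to $C^{s+2}$ precisely so the second-order Taylor remainder used in (i) is controlled.

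Part (iii) is then immediate from the same decomposition used in Theorem \ref{teorcavbc},
$$L_h u(t_n)-U_h^n=[L_h u(t_n)-U_h(t_n)]+[U_h(t_n)-U_h^n],$$
where the second bracket is $e_{h,n}=O(k^{s+1})$ by (ii), and the first bracket is $O(\varepsilon_h)$ by the very same variation-of-constants estimate $L_h u(t)-U_h(t)=\int_0^t e^{(t-s)A_{h,0}}O(\varepsilon_h)\,ds$ that appeared before, using $g=0$ and (\ref{epsi}); this part does not change at all. The main obstacle I anticipate is step (i): one has to be careful to isolate exactly which piece of the interpolation-plus-quadrature error is annihilated by the exactness hypothesis and which survives with a compensating $A_{h,0}^{-1}$, i.e. to show that the $O(k^{s+1})$ remnant genuinely comes multiplied by an inverse power of $A_{h,0}$ (equivalently, that it is of the form $A_{h,0}\times O(k^{s+2})$), rather than being a bare $O(k^{s+1})$; this is the crux that makes (\ref{hparab}) usable in step (ii), and it is where the hypothesis that the quadrature integrates degree-$s$ polynomials exactly (not merely degree $s-1$, which interpolation in $s$ nodes already guarantees) is essential.
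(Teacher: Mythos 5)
Your proposal follows essentially the same route as the paper's own proof: for (i) the paper likewise writes the interpolation error as $k^s\bigl[f^{(s)}(t_n)\prod_{i=1}^s(\theta-c_i)+O(k)\bigr]$, splits $e^{k(1-\theta)A_{h,0}}$ into $I$ plus $k(1-\theta)A_{h,0}\varphi_1(k(1-\theta)A_{h,0})$, kills the $O(k^{s+1})$ identity contribution via $\int_0^1\prod_{i=1}^s(\theta-c_i)\,d\theta=0$ (exactness for degree $s$), and for (ii) invokes the same summation-by-parts argument based on (\ref{hparab}) (citing \cite{acr4}), with (iii) unchanged from Theorem \ref{teorcavbc}. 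Two small slips worth noting: the surviving piece carries a \emph{positive} power of $A_{h,0}$ (it is $A_{h,0}\times O(k^{s+2})$, as you say parenthetically, not $k^{s+1}A_{h,0}^{-1}\times(\text{bounded})$, which after applying $A_{h,0}^{-1}$ would only give $O(k^{s+1})$), and the hypothesis $f\in C^{s+2}$ is needed for the differences of consecutive local errors in the summation-by-parts step of (ii), not for the Taylor remainder in (i), which only requires $C^{s+1}$.
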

\begin{proof}
To prove (i), it suffices to consider the following formula for the interpolation error which is valid when $f\in C^{s+1}$:
$$f(t_n+k \theta)-I(f(t_n+k \theta))=k^s \bigg[ f^{(s)}(t_n)\prod_{i=1}^s(\theta-c_i)+O(k) \bigg].$$
Then, substituting in (\ref{loccd}) and multiplying by $A_{h,0}^{-1}$,
\begin{eqnarray}
A_{h,0}^{-1}\rho_{h,n+1}&=&k^{s+2}\int_0^1 (1-\theta)(k(1-\theta) A_{h,0})^{-1} [e^{k(1-\theta)A_{h,0}}-I] L_h[f^{(s)}(t_n)\prod_{i=1}^s(\theta-c_i)+O(k)]d\theta
\nonumber \\
&&+k^{s+2} \int_0^1 k^{-1} A_{h,0}^{-1} L_h[f^{(s)}(t_n)\prod_{i=1}^s(\theta-c_i)+O(k)]d\theta \nonumber \\
&=& k^{s+2} \int_0^1 (1-\theta) \varphi_1(k (1-\theta) A_{h,0})L_h f^{(s)}(t_n)\prod_{i=1}^s(\theta-c_i)d\theta \nonumber \\
&&+k^{s+1} \bigg{(} \int_0^1 \prod_{i=1}^s(\theta-c_i) d\theta \bigg{)} A_{h,0}^{-1} L_h f^{(s)}(t_n) +O(k^{s+2})=O(k^{s+2}), \nonumber
\end{eqnarray}
where we have used (\ref{recurf}), (H1)-(H2) in \cite{acr2} and the fact that the integral in brackets vanishes because the interpolatory quadrature rule is exact for polynomials of degree $s$.

As for (ii), a summation-by-parts argument like that given in \cite{acr4} for splitting exponential methods also applies here because of hypothesis (\ref{hparab}) and the fact that $f\in C^{s+2}$. Finally, (iii) follows in the same way as in the proof of Theorem \ref{teorcavbc}.
\end{proof}

\begin{remark}
\label{norma2p}
Notice that, when $A_{h,0}$ is symmetric with negative eigenvalues and $\|\cdot\|_h$ is the discrete $L^2$-norm, $\|k A_{h,0} \sum_{r=1}^{n-1} e^{r k A_{h,0}}\|_h$ coincides with its spectral radius. As, for each eigenvalue $\lambda_h$,
$$
\bigg| k \lambda_h \sum_{r=1}^{n-1} e^{rk \lambda_h} \bigg|=k |\lambda_h|\frac{e^{k \lambda_h}-e^{t_n \lambda_h}}{1-e^{k \lambda_h}},
$$
and this is bounded in the negative real axis, (\ref{hparab}) follows. In fact, this bound has been proved in \cite{HO1} for analytic semigroups covering the case in which (\ref{laibvp}) corresponds to parabolic problems. Therefore it seems natural that it is also satisfied by a suitable space discretization.

\end{remark}

On the other hand, when $g \not\equiv 0$ in (\ref{laibvp}), the semidiscretized problem which arises is
\begin{eqnarray}
\left.
\begin{array}{lcl}
U'_h(t)&=&A_{h,0}U_h(t) + A_h Q_hg(t) +L_h Q_h(\partial f(t)-g'(t))+ P_hf(t),\\
U_h(0)&=&P_h u(0).
\end{array}
\right.
\nonumber
\end{eqnarray}
In a similar way as before, the local error would be given by
\begin{eqnarray}
\lefteqn{k \int_0^1 e^{k(1-\theta)A_{h,0}}\bigg[ A_h Q_h [g(t_n+k \theta)-I(g(t_n+k \theta))]} \nonumber \\
&& \hspace{2.5cm}+L_h Q_h [\partial f(t_n+k \theta)-g'(t_n+k \theta)-I(\partial f(t_n+k \theta)-g'(t_n+k \theta)) \nonumber \\
&&\hspace{2.5cm} + P_h [f(t_n+k \theta)-I(f(t_n+k \theta))]\bigg] d \theta. \label{lteca}
\end{eqnarray}
Again, when $g\in C^{s+1}([0,T],Y)$ and $f\in C^s([0,T], X)$, the error of interpolation will be $O(k^s)$. However, although $L_h Q_h$ and $P_h$ are bounded \cite{acr2}, $A_h Q_h$ is not bounded any more. That is why we state the following result which bounds in fact $A_{h,0}^{-1} \rho_{h,n}$ by using (HS) and which proof for the global error is the same as in Theorem \ref{teorcavbcsp}.
\begin{theorem}
\label{teorcanvbcsp}
Let us assume that $g(t)\not\equiv 0$ in (\ref{laibvp}), $u$ belongs to $C([0,T],Z)$, $g$ to $C^{s+2}([0,T],Y)$, $f$ to $C^{s+1}([0,T], X)$, and the bound (\ref{hparab}) holds.
Then,
\begin{enumerate}
\item[(i)]
$A_{h,0}^{-1}\rho_{h,n}=O(k^{s+1})$,
\item[(ii)]
$e_{h,n}=O(k^s)$,
\item[(iii)]
$L_h u(t_n)- U_h^n=O(k^s+ \varepsilon_h)$.
\end{enumerate}
where the constants in Landau notation are independent of $k$ and $h$.
\end{theorem}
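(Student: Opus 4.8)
The plan is to follow closely the proofs of Theorems~\ref{teorcavbc} and~\ref{teorcavbcsp}, the only genuinely new ingredient being that, because $g\not\equiv 0$, the operator $A_hQ_h$ occurring in the local error~(\ref{lteca}) is no longer bounded uniformly in $h$, and hypothesis~(HS) is precisely what is needed to control it. For part~(i) I would multiply~(\ref{lteca}) by $A_{h,0}^{-1}$ and move this factor inside the integral, commuting it with $e^{k(1-\theta)A_{h,0}}$ by the functional calculus. The critical summand then becomes
\[
k\int_0^1 e^{k(1-\theta)A_{h,0}}\,\bigl(A_{h,0}^{-1}A_hQ_h\bigr)\,\bigl[\,g(t_n+k\theta)-I(g(t_n+k\theta))\,\bigr]\,d\theta,
\]
where $A_{h,0}^{-1}A_hQ_h$ is bounded uniformly in $h$ by~(HS), $e^{k(1-\theta)A_{h,0}}$ is bounded uniformly in $h$ by (H1)--(H2) in~\cite{acr2}, and the interpolation error is $O(k^{s})$ since $g\in C^{s+2}$. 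The remaining two summands are handled directly: $A_{h,0}^{-1}$ is bounded uniformly in $h$ ($A_0$, hence $A_{h,0}$, being invertible), $L_hQ_h$ and $P_h$ are bounded by~\cite{acr2}, and the interpolation errors of $\partial f-g'$ and of $f$ are again $O(k^{s})$ under the stated regularity. In all three summands the explicit factor $k$ in front of the integral provides the extra power, so $A_{h,0}^{-1}\rho_{h,n+1}=O(k^{s+1})$. Observe that, contrary to Theorem~\ref{teorcavbcsp}, the $A_hQ_h$ summand admits only the single factor $A_{h,0}^{-1}$ supplied by the multiplication, so the quadrature exactness cannot be exploited to gain a further power of $k$ there, which is why the estimate reads $O(k^{s+1})$ and not $O(k^{s+2})$.

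For part~(ii) I would use the one-step recursion $e_{h,n+1}=e^{kA_{h,0}}e_{h,n}+\rho_{h,n+1}$, which gives
\[
e_{h,n}=\sum_{j=1}^{n}e^{(n-j)kA_{h,0}}\rho_{h,j}=\sum_{j=1}^{n}e^{(n-j)kA_{h,0}}A_{h,0}\,\bigl(A_{h,0}^{-1}\rho_{h,j}\bigr).
\]
A crude bound through the smoothing of the semigroup would lose a factor $\log(1/k)$; to avoid it I would run the same summation-by-parts argument as in~\cite{acr4}, already invoked for Theorem~\ref{teorcavbcsp}, writing $A_{h,0}^{-1}\rho_{h,j}=k^{s+1}\Psi_h(t_{j-1})+O(k^{s+2})$ with $\Psi_h$ built from $L_h$ and $A_{h,0}^{-1}A_hQ_h$ acting on smooth, $h$-independent functions, so that $\Psi_h(t_j)-\Psi_h(t_{j-1})=O(k)$. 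Performing the Abel summation against the partial sums $kA_{h,0}\sum_r e^{rkA_{h,0}}$, which are bounded by~(\ref{hparab}), and absorbing the diagonal term $j=n$ separately --- using that $\rho_{h,n}$ is itself $O(k^{s+1})$ because $k\int_0^1 e^{k(1-\theta)A_{h,0}}\,d\theta=A_{h,0}^{-1}(e^{kA_{h,0}}-I)$ stays bounded although $kA_{h,0}$ does not --- one arrives at $e_{h,n}=O(k^{s})$.

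Finally, part~(iii) follows as in the proofs of Theorems~\ref{teorcavbc} and~\ref{teorcavbcsp}: I would split $L_hu(t_n)-U_h^n$ into the time-discretization error $U_h(t_n)-U_h^n=e_{h,n}=O(k^{s})$ from part~(ii) and the space-discretization error; the latter satisfies a differential equation whose defect, once the boundary contributions are accounted for through~(\ref{elipp}) and~(HS), reduces to $A_{h,0}(R_h-P_h)u(t)$, which is $O(\varepsilon_h)$ by~(\ref{epsi}), so a variation-of-constants representation together with the exponential decay of $e^{tA_{h,0}}$ gives $O(\varepsilon_h)$ and the two estimates add up to $O(k^{s}+\varepsilon_h)$.

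I expect the real obstacle to be part~(ii): extracting from $A_{h,0}^{-1}\rho_{h,j}$ a smooth principal part that is uniform in $h$, and carrying the Abel summation through uniformly in $h$, in particular disposing of the diagonal term where no regularisation from $e^{rkA_{h,0}}$ is available. Parts~(i) and~(iii) are essentially bookkeeping on top of the machinery already assembled in Section~\ref{prel} and in~\cite{acr2}, the one new element being the systematic use of~(HS) to absorb $A_hQ_h$.
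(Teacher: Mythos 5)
Your proposal follows essentially the same route as the paper: part (i) is obtained by multiplying (\ref{lteca}) by $A_{h,0}^{-1}$ and using (HS) to absorb the unbounded factor $A_hQ_h$ while $L_hQ_h$ and $P_h$ stay bounded, part (ii) by the same summation-by-parts argument of \cite{acr4} under hypothesis (\ref{hparab}) already invoked for Theorem \ref{teorcavbcsp}, and part (iii) exactly as in Theorems \ref{teorcavbc} and \ref{teorcavbcsp}. The paper leaves these steps at least as terse as your sketch, so no discrepancy arises.
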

\begin{remark}
\label{norma2}
As in Remark \ref{norma2p}, if $A_{h,0}$ is a symmetric matrix with negative eigenvalues and the discrete $L^2$-norm is considered, $\|k A_{h,0} e^{k(1-\theta)A_{h,0}}\|_h$ coincides with its spectral radius. As for each eigenvalue $\lambda_h$ of $A_{h,0}$,
$$
\int_0^1 k |\lambda_h| e^{k(1-\theta) \lambda_h} d \theta=\int_0^1 \frac{d}{d\theta}(e^{k(1-\theta)\lambda_h}) d \theta=1-e^{k \lambda_h}\le 1,
$$
considering this in the first part of (\ref{lteca}) explains that the local error $\rho_{h,n}$ behaves as $O(k^s)$ under the rest of hypotheses of Theorem \ref{teorcanvbcsp}.
\end{remark}
In any case, we want to remark in this section that accuracy has been lost with respect to the vanishing boundary conditions case since order reduction turns up at least for the local error and, in many cases, also for the global error.

\section{Suggested approach: Discretizing firstly in time and then in space}

In this section, we directly tackle the nonvanishing boundary conditions case by discretizing in a suitable way firstly in time and then in space. We will see that we manage to get at least the same order as with the classical approach when vanishing boundary conditions are present, but even  a much higher order some times.

 Let us suggest how to apply the exponential quadrature rule (\ref{qr}) directly to (\ref{laibvp}). When $g=0$, $B$ in (\ref{qr}) is directly substituted by $A_0$ and there is no problem because $e^{k A_0}$ and $\varphi_j(k A_0)$ have perfect sense over $X$. However, it has no sense to do that
when $g\neq 0$ because $A$ is not $A_0$ any more. For Lawson methods, for which just exponential functions appear, instead of $e^{\tau A_0} \alpha$, it was suggested in \cite{acr2} to consider $v_0(\tau)$ as the solution of
\begin{eqnarray}
v_0'(\tau)&=& A v_0(\tau), \nonumber \\
v_0(0)&=& \alpha, \nonumber \\
\partial v_0(\tau)&=&\sum_{l=0}^p \frac{\tau^l}{l!} \partial A^l \alpha, \label{ecv0}
\end{eqnarray}
whenever $\alpha\in D(A^p)$. In such a way, if $\alpha\in D(A^{p+1})$,
\begin{eqnarray}
v_0(\tau)=\sum_{l=0}^p \frac{\tau^l}{l!} A^l \alpha +\tau^{p+1} \varphi_{p+1}(\tau A_0) A^{p+1}\alpha,
\label{v0}
\end{eqnarray}
which resembles the formal analytic expansion of the exponential of $\tau A$ applied over $\alpha$.
In this manuscript then, whenever $\alpha\in D(A^p)$,  for $j=1,\dots,s$, instead of $\varphi_j(\tau A_0) \alpha$, we suggest to consider  the following functions :
\begin{eqnarray}
v_j(\tau)=\sum_{l=0}^{p-1} \frac{\tau^l}{(l+j)!} A^l \alpha +\tau^p \varphi_{p+j}(\tau A_0) A^p\alpha. \label{vjalpha}
\end{eqnarray}
This resembles the formal analytic expansion of $\varphi_j$ when evaluated at $\tau A$  and applied over  $\alpha$. (Notice that, for $j=0$, this would correspond to (\ref{v0}) changing $p$ by $p-1$. As the functions $\varphi_j$ are multiplied by $k$ in (\ref{qr}), we need one less term in this expansion.)

Therefore, imitating (\ref{qr}), we suggest to consider as continuous numerical approximation $u_{n+1}$ from the previous $u_n$,
\begin{eqnarray}
u_{n+1}=\lefteqn{\sum_{l=0}^p \frac{k^l}{l!} A^l u_n +k^{p+1} \varphi_{p+1}(k A_0) A^{p+1}u_n} \nonumber \\
&&+ k \sum_{i,j=1}^s a_{i,j} \bigg[ \sum_{l=0}^{p-1} \frac{k^l}{(l+j)!} A^l f(t_n+ c_i k) +k^p \varphi_{p+j}(k A_0) A^p f(t_n+ c_i k) \bigg].\label{qrcont}
\end{eqnarray}

It is not practical to discretize directly this formula in space since numerical differentiation is a badly-posed problem. However,  if we
seek a differential equation which the functions (\ref{vjalpha}) satisfy and apply a numerical integrator to it, there should be no problems. For that, let us first consider the following lemma.
\begin{lemma} For $j\ge 1$,
$$\frac{d}{d\tau} \varphi_j(\tau A_0)=(A_0-\frac{j}{\tau}I)\varphi_j(\tau A_0)+\frac{1}{(j-1)!\tau} I, \quad \tau>0.$$
\label{lemphi0}
\end{lemma}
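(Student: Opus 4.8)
The plan is to work directly from the integral representation (\ref{varphi}) of $\varphi_j(\tau A_0)$. Writing $\psi_j(\tau)=\tau^j\varphi_j(\tau A_0)=\int_0^\tau e^{(\tau-\sigma)A_0}\frac{\sigma^{j-1}}{(j-1)!}\,d\sigma$, the advantage is that $\psi_j$ is a genuine convolution integral, so differentiating in $\tau$ is a routine application of the Leibniz rule: the boundary term at $\sigma=\tau$ contributes $\frac{\tau^{j-1}}{(j-1)!}I$ (using $e^{0\cdot A_0}=I$), and differentiating under the integral sign brings down a factor $A_0$, giving $\frac{d}{d\tau}\psi_j(\tau)=A_0\psi_j(\tau)+\frac{\tau^{j-1}}{(j-1)!}I$. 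This step is clean because for $j\ge 1$ the integrand is continuous up to $\sigma=\tau$ and $A_0$ generates the $C_0$-semigroup $e^{tA_0}$, so the differentiation under the integral is justified on $D(A_0)$ and extends by boundedness of the relevant operators for $\tau>0$ (hypothesis (A2) and the boundedness of $\varphi_j(\tau A_0)$ noted in the preliminaries).

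Next I would translate this back to $\varphi_j$ itself via the substitution $\varphi_j(\tau A_0)=\tau^{-j}\psi_j(\tau)$. Then
$$\frac{d}{d\tau}\varphi_j(\tau A_0)=-j\tau^{-j-1}\psi_j(\tau)+\tau^{-j}\frac{d}{d\tau}\psi_j(\tau)=-\frac{j}{\tau}\varphi_j(\tau A_0)+\tau^{-j}\Big(A_0\psi_j(\tau)+\frac{\tau^{j-1}}{(j-1)!}I\Big).$$
Since $\tau^{-j}A_0\psi_j(\tau)=A_0\varphi_j(\tau A_0)$ and $\tau^{-j}\cdot\frac{\tau^{j-1}}{(j-1)!}I=\frac{1}{(j-1)!\tau}I$, this is exactly
$$\frac{d}{d\tau}\varphi_j(\tau A_0)=\Big(A_0-\frac{j}{\tau}I\Big)\varphi_j(\tau A_0)+\frac{1}{(j-1)!\,\tau}I,$$
which is the claimed identity.

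An alternative, perhaps slicker, route is to differentiate the recursion (\ref{recurf}) directly: from $\varphi_{k+1}(z)=(\varphi_k(z)-1/k!)/z$ one gets a relation between $\varphi_j$, $\varphi_{j-1}$ and their derivatives, and the scalar identity $\varphi_j'(z)=\varphi_{j-1}(z)-j\varphi_j(z)$ combined with $z\varphi_{j+1}(z)=\varphi_j(z)-1/j!$ can be manipulated into the stated form after replacing $z$ by $\tau A_0$ and accounting for the chain rule factor; but this requires care in passing from a scalar functional identity to the operator setting, so I would prefer the convolution-integral argument as the primary proof.

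The only genuine subtlety — hence the main obstacle — is justifying the differentiation under the integral sign and the manipulations involving the unbounded operator $A_0$. This is handled by first establishing the identity on the dense domain $D(A_0)$ (or on $D(A_0^m)$ for suitable $m$), where $A_0e^{(\tau-\sigma)A_0}$ is well behaved, and then invoking the fact from the preliminaries that $\varphi_j(\tau A_0)$ and $A_0\varphi_j(\tau A_0)=\tau^{-j}A_0\psi_j(\tau)$ are bounded operators for $\tau>0$, so both sides extend continuously to all of $X$. The factor $1/\tau$ is harmless as long as $\tau>0$, which is exactly the hypothesis of the lemma.
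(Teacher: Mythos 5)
Your argument is correct and is essentially the paper's own proof: both differentiate the integral representation (\ref{varphi}) via the Leibniz rule, picking up the boundary term $\tau^{j-1}/(j-1)!\,I$ and the factor $A_0$ under the integral, and combine with the product rule on $\tau^{-j}$. Your reorganization through $\psi_j(\tau)=\tau^j\varphi_j(\tau A_0)$ and your remarks on justifying the differentiation on $D(A_0)$ are just a slightly more explicit presentation of the same computation.
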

\begin{proof} It suffices to differentiate considering (\ref{varphi}),
\begin{eqnarray}
\lefteqn{\frac{d}{d\tau}\bigg[ \frac{1}{\tau^j} \int_0^\tau e^{(\tau-\theta)A_0} \frac{\theta^{j-1}}{(j-1)!} d\theta\bigg]} \nonumber \\
&=&-\frac{j}{\tau^{j+1}}\int_0^\tau e^{(\tau-\theta)A_0} \frac{\theta^{j-1}}{(j-1)!}d \theta+\frac{1}{\tau^j} \frac{\tau^{j-1}}{(j-1)!}I +\frac{1}{\tau^j} \int_0^\tau A_0 e^{(\tau-\theta)A_0}\frac{\theta^{j-1}}{(j-1)!}d \theta \nonumber \\
&=&(A_0-\frac{j}{\tau}I)\varphi_j(\tau A_0)+\frac{1}{(j-1)!\tau} I. \nonumber
\end{eqnarray}
\end{proof}
From here, the next result follows:
\begin{lemma}
\label{lemvj}
The function $v_j(\tau)$ in (\ref{vjalpha}) satisfies the following initial boundary value problem:
\begin{eqnarray}
v_j'(\tau)&=& (A-\frac{j}{\tau}) v_j(\tau)+\frac{1}{(j-1)!\tau}\alpha, \quad \tau>0, \nonumber \\
v_j(0)&=& \frac{1}{j!}\alpha, \nonumber \\
\partial v_j(\tau)&=&\sum_{l=0}^{p-1} \frac{\tau^l}{(l+j)!} \partial A^l \alpha. \label{ecvj}
\end{eqnarray}
\end{lemma}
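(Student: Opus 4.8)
The plan is to verify the three assertions of the lemma directly from the explicit formula (\ref{vjalpha}), differentiating term by term and invoking Lemma \ref{lemphi0} for the only non-polynomial contribution.

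The initial and boundary conditions are immediate. Evaluating (\ref{vjalpha}) at $\tau=0$: if $p\ge 1$, all summands carrying a positive power of $\tau$, as well as $\tau^p\varphi_{p+j}(\tau A_0)A^p\alpha$, vanish, leaving $v_j(0)=\frac{1}{j!}\alpha$; if $p=0$ the sum is empty and $v_j(0)=\varphi_j(0)\alpha=\frac{1}{j!}\alpha$ by (\ref{recurf}). For the boundary condition, $\varphi_{p+j}(\tau A_0)A^p\alpha\in D(A_0)=\mathrm{Ker}(\partial)$ for $\tau>0$ by (A2), so applying $\partial$ to (\ref{vjalpha}) annihilates that term and yields $\partial v_j(\tau)=\sum_{l=0}^{p-1}\frac{\tau^l}{(l+j)!}\partial A^l\alpha$.

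For the differential equation I would differentiate (\ref{vjalpha}) to obtain
\[
v_j'(\tau)=\sum_{l=1}^{p-1}\frac{l\,\tau^{l-1}}{(l+j)!}A^l\alpha+p\,\tau^{p-1}\varphi_{p+j}(\tau A_0)A^p\alpha+\tau^p\Big[\tfrac{d}{d\tau}\varphi_{p+j}(\tau A_0)\Big]A^p\alpha,
\]
and then substitute Lemma \ref{lemphi0} (with $j$ replaced by $p+j$) into the last term, using that $A$ and $A_0$ agree on $D(A_0)$, so that $A_0\varphi_{p+j}(\tau A_0)A^p\alpha=A\varphi_{p+j}(\tau A_0)A^p\alpha$. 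This produces the pieces $\tau^p A\varphi_{p+j}(\tau A_0)A^p\alpha$, $-j\,\tau^{p-1}\varphi_{p+j}(\tau A_0)A^p\alpha$ and $\frac{\tau^{p-1}}{(p+j-1)!}A^p\alpha$. On the other hand, expanding $(A-\frac{j}{\tau})v_j(\tau)+\frac{1}{(j-1)!\tau}\alpha$ via (\ref{vjalpha}), the term $-\frac{j}{\tau}v_j(\tau)$ contributes $-\frac{j}{j!\,\tau}\alpha$, which cancels $\frac{1}{(j-1)!\tau}\alpha$; after shifting the summation index in $Av_j(\tau)$ and using the elementary identity $\frac{1}{(l+j-1)!}-\frac{j}{(l+j)!}=\frac{l}{(l+j)!}$, the remaining polynomial and $\varphi_{p+j}$ contributions coincide exactly with $v_j'(\tau)$.

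The argument carries no genuine difficulty beyond careful bookkeeping of the index shifts and factorials; the two points worth an explicit word are the replacement of $A_0$ by $A$ on the range of $\varphi_{p+j}(\tau A_0)$, which is legitimate by (A2), and the degenerate case $p=0$, in which (\ref{ecvj}) reduces to precisely the statement of Lemma \ref{lemphi0} applied to $v_j(\tau)=\varphi_j(\tau A_0)\alpha$.
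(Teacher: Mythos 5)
Your proposal is correct and follows essentially the same route as the paper: differentiate (\ref{vjalpha}) term by term, apply Lemma \ref{lemphi0} with index $p+j$ to the $\varphi_{p+j}$ term (using that $A$ and $A_0$ agree on $D(A_0)$), and match against the expansion of $(A-\frac{j}{\tau})v_j(\tau)+\frac{1}{(j-1)!\tau}\alpha$ after the index shift and the cancellation of the $\frac{1}{(j-1)!\tau}\alpha$ term. Your explicit verification of the initial and boundary conditions and of the degenerate case $p=0$ is a small addition the paper leaves implicit, but the substance is identical.
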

\begin{proof}
Notice that, using Lemma \ref{lemphi0},
\begin{eqnarray}
v_j'(\tau)&=&\sum_{l=1}^{p-1} \frac{l \tau^{l-1}}{(l+j)!}A^l \alpha+p \tau^{p-1} \varphi_{p+j}(\tau A_0) A^p \alpha \nonumber \\
&&+\tau^p[(A_0-\frac{p+j}{\tau}I)\varphi_{p+j}(\tau A_0)+\frac{1}{(p+j-1)!\tau}I] A^p \alpha \nonumber \\
&=& \sum_{l=1}^{p-1}\frac{l \tau^{l-1}}{(l+j)!}A^l \alpha+\frac{\tau^{p-1}}{(p+j-1)!} A^p \alpha-j \tau^{p-1} \varphi_{p+j}(\tau A_0) A^p \alpha+\tau^p A_0 \varphi_{p+j}(\tau A_0) A^p \alpha.
\nonumber
\end{eqnarray}
On the other hand,
\begin{eqnarray}
\lefteqn{(A-\frac{j}{\tau})v_j(\tau)+\frac{1}{(j-1)!\tau}\alpha} \nonumber \\
&=&\sum_{l=0}^{p-1}\frac{\tau^l}{(l+j)!}A^{l+1}\alpha+\tau^p A \varphi_{p+j}(\tau A_0) A^p \alpha
-j \sum_{l=1}^{p-1} \frac{\tau^{l-1}}{(l+j)!}A^l \alpha-j \tau^{p-1}\varphi_{p+j}(\tau A_0) A^p \alpha \nonumber \\
&=&\sum_{m=1}^{p-1} \frac{m}{(m+j)!}\tau^{m-1}A^m \alpha+\frac{\tau^{p-1}}{(p-1+j)!}A^p \alpha-j \tau^{p-1} \varphi_{p+j}(\tau A_0) A^p \alpha+\tau^p A \varphi_{p+j}(\tau A_0) A^p \alpha, \nonumber
\end{eqnarray}
where the change $m=l+1$ has been used in the second line for the first sum. Therefore, the lemma is proved taking also into account that $\varphi_{p+j}(\tau A_0) A^p \alpha \in D(A_0)$.
\end{proof}

With Lawson methods \cite{acr2}, starting from a previous approximation $U_{h,n}$ to $P_h u(t_n)$ and discretizing (\ref{ecv0}) in space with $\alpha=u(t_n)$ in $\partial v_0(\tau)$ led to a term like
\begin{eqnarray}
V_{h,n,0}(k)&=&e^{k A_{h,0}} U_{h,n}+\sum_{j=1}^p k^{j} \varphi_j(k A_{h,0})[A_h Q_h \partial A^{j-1} u(t_n)-L_h Q_h \partial A^j u(t_n)]
\nonumber \\&&
+k^{p+1} \varphi_{p+1}(k A_{h,0}) A_h Q_h \partial A^p u(t_n), \label{Vh0}
\end{eqnarray}
which is the approximation which corresponds to the first term in (\ref{qr}).

For the rest of the terms in (\ref{qr}), we suggest to discretize (\ref{ecvj}) in space with $\alpha=f(t_n+c_i k)$. In such a way, the following system turns up:
\begin{eqnarray}
V_{h,j,n,i}'(\tau)+L_h Q_h \partial \hat{v}_{j,n,i}'(\tau)&=& A_{h,0} V_{h,j,n,i}(\tau)+A_h Q_h \partial \hat{v}_{j,n,i}(\tau)-\frac{j}{\tau}[V_{h,j,n,i}(\tau)+L_h Q_h \partial \hat{v}_{j,n,i}(\tau)] \nonumber \\
&&+\frac{1}{(j-1)!\tau}[P_h f(t_n+c_i k)+ L_h Q_h \partial f(t_n+c_i k)], \nonumber \\
V_{h,j,n,i}(0)+L_h Q_h \partial \hat{v}_{j,n,i}(0)&=&\frac{1}{j!} L_h f(t_n+c_i k),
\nonumber
\end{eqnarray}
where
\begin{eqnarray}
\hat{v}_{j,n,i}(\tau)=\sum_{l=0}^{p-1} \frac{\tau^l}{(l+j)!} A^l f(t_n+c_i k).
\nonumber
\end{eqnarray}
This can be rewritten as
\begin{eqnarray}
\hspace{0.2cm}V_{h,j,n,i}'(\tau)&=& (A_{h,0}-\frac{j}{\tau} I) V_{h,j,n,i}(\tau)+A_h Q_h \partial \hat{v}_{j,n,i}(\tau)+\frac{1}{(j-1)!\tau}P_h f(t_n+c_i k) \nonumber \\
&&+L_h Q_h \partial [\frac{1}{(j-1)!\tau} f(t_n+c_i k)-\frac{j}{\tau} \hat{v}_{j,n,i}(\tau)- \hat{v}_{j,n,i}'(\tau)], \nonumber \\
\hspace{0.2cm}V_{h,j,n,i}(0)&=&\frac{1}{j!} P_h f(t_n+c_i k).
\label{ecVhj}
\end{eqnarray}
With the same arguments as in Lemma \ref{lemphi0}, $\varphi_j(\tau A_{h,0})P_h f(t_n+c_i k)$ is the solution of
\begin{eqnarray}
W_{h,j,n,i}'(\tau)&=& (A_{h,0}-\frac{j}{\tau} I) W_{h,j}(\tau)+\frac{1}{(j-1)!\tau}P_h f(t_n+c_i k), \nonumber \\
W_{h,j,n,i}(0)&=&\frac{1}{j!} P_h f(t_n+c_i k).
\nonumber
\end{eqnarray}
Therefore, in order to solve (\ref{ecVhj}), we are interested in finding
\begin{eqnarray}
Z_{h,j,n,i}(\tau)=V_{h,j,n,i}(\tau)-W_{h,j,n,i}(\tau),
\label{zhj}
\end{eqnarray}
which is the solution of
\begin{eqnarray}
Z_{h,j,n,i}'(\tau)&=& (A_{h,0}-\frac{j}{\tau} I) Z_{h,j,n,i}(\tau)+A_h Q_h \partial \hat{v}_{j,n,i}(\tau)
\nonumber \\
&&+L_h Q_h \partial[\frac{1}{(j-1)!\tau} f(t_n+c_i k)-\frac{j}{\tau} \hat{v}_{j,n,i}(\tau)- \hat{v}_{j,n,i}'(\tau)], \nonumber \\
Z_{h,j,n,i}(0)&=&0.
\label{Zhjni}
\end{eqnarray}
Now,  using the first line of (\ref{ecvj}) for the boundary with $\alpha=f(t_n+c_ik)$,
\begin{eqnarray}
\lefteqn{\partial [\frac{1}{(j-1)!\tau} f(t_n+c_i k)-\frac{j}{\tau} \hat{v}_{j,n,i}(\tau)- \hat{v}_{j,n,i}'(\tau)]=-\partial A v_{j,n,i}(\tau) } \nonumber \\
&=&-\sum_{l=0}^{p-1} \frac{\tau^l}{(l+j)!} \partial A^{l+1} f(t_n+c_i k)-\tau^p \partial A_0 \varphi_{p+j}(\tau A_0)A^p f(t_n+c_ik), \nonumber
\end{eqnarray}
the fact that
\begin{eqnarray}
\lefteqn{\tau^p A_0 \frac{1}{\tau^{p+j}}\int_0^{\tau} e^{(\tau-\sigma)A_0} \frac{\sigma^{p+j-1}}{(p+j-1)!}A^p f(t_n+c_i k) d \sigma} \nonumber \\
&&=-\frac{1}{\tau^j}e^{(\tau-\sigma)A_0} \frac{\sigma^{p+j-1}}{(p+j-1)!}|_{\sigma=0}^{\sigma=\tau}A^p f(t_n+c_i k)+\frac{1}{\tau^j} \int_0^{\tau} e^{(\tau-\sigma)A_0}
\frac{\sigma^{p+j-2}}{(p+j-2)!} A^p f(t_n+c_i k) d \sigma \nonumber \\
&&=-\frac{\tau^{p-1}}{(p+j-1)!} A^p f(t_n+c_i k)+\tau^{p-1} \varphi_{p+j-1}(\tau A_0) A^p f(t_n+c_i k), \nonumber
\end{eqnarray}
and that the boundary of the second term vanishes, it follows that
\begin{eqnarray}
\partial[\frac{1}{(j-1)!\tau} f(t_n+c_i k)-\frac{j}{\tau} \hat{v}_{j,n,i}(\tau)- \hat{v}_{j,n,i}'(\tau)]=
-\sum_{l=0}^{p-2} \frac{\tau^l}{(l+j)!}\partial A^{l+1} f(t_n+c_i k).
\nonumber
\end{eqnarray}
Using this in (\ref{Zhjni}),
\begin{eqnarray}
\lefteqn{Z_{h,j,n,i}(\tau)} \nonumber \\
&=&\int_0^\tau e^{\int_{\theta}^\tau (A_{h,0}-\frac{j}{\sigma}I)d\sigma}\sum_{l=0}^{p-2} \frac{\theta^l}{(l+j)!}\bigg[A_h Q_h \partial  A^l f(t_n+c_i k)
-L_h Q_h \partial A^{l+1} f(t_n+c_i k)
\nonumber \\
&&\hspace{6cm}+\frac{\theta^{p-1}}{(p-1+j)!}A_h Q_h \partial A^{p-1} f(t_n+c_i k)
\bigg]d\theta
\nonumber \\
&=&\sum_{l=0}^{p-2} \int_0^\tau e^{A_{h,0}(\tau-\theta)}\frac{\theta^{j+l}}{\tau^j(l+j)!}[A_h Q_h \partial  A^l f(t_n+c_i k)
-L_h Q_h \partial A^{l+1} f(t_n+c_i k)]d\theta \nonumber \\
&&+\int_0^\tau e^{A_{h,0}(\tau-\theta)}\frac{\theta^{p-1+j}}{\tau^j(p-1+j)!}A_h Q_h \partial  A^{p-1} f(t_n+c_i k) d\theta \nonumber \\
&=&\sum_{l=0}^{p-2} \tau^{l+1} \varphi_{j+l+1}(\tau A_{h,0})[A_h Q_h \partial  A^l f(t_n+c_i k)-L_h Q_h \partial A^{l+1} f(t_n+c_i k)]
\nonumber \\
&&+\tau^p \varphi_{p+j}(\tau A_{h,0}) A_h Q_h \partial A^{p-1}f(t_n+c_i k).
\end{eqnarray}
Therefore, using (\ref{zhj}),
\begin{eqnarray}
\lefteqn{\hspace{-1cm}V_{h,j,n,i}(k)=\varphi_j(k A_{h,0}) P_h f(t_n+c_i k)} \nonumber \\
&&+\sum_{l=0}^{p-2} k^{l+1} \varphi_{j+l+1}(k A_{h,0})[A_h Q_h \partial  A^l f(t_n+c_i k)
-L_h Q_h \partial A^{l+1} f(t_n+c_i k)],
\nonumber \\
&&+k^p \varphi_{p+j}(k A_{h,0})A_h Q_h \partial A^{p-1} f(t_n+c_i k),
 \label{Vhij}
\end{eqnarray}
and the overall exponential quadrature rule would be given by
\begin{eqnarray}
U_{h,0}&=&P_h u_0, \nonumber \\
U_{h,n+1}&=&V_{h,n,0}(k)+k\sum_{i,j=1}^s a_{i,j} V_{h,j,n,i}(k),
\label{ff}
\end{eqnarray}
with $V_{h,n,0}(k)$ in (\ref{Vh0}) and $V_{h,j,n,i}(k)$ in (\ref{Vhij}).

\subsection{Time semidiscretization error}
\label{tde}
Let us first study just the error after time discretization.
The local truncation error  is well-known to be given by $\rho_n=u(t_{n+1})-\bar{u}_{n+1}$, where $\bar{u}_{n+1}$ is given by expression (\ref{qrcont}) substituting  $u_n$ by $u(t_n)$.

Let us first consider the following general result, which will allow to conclude more particular results depending on the choice of the values $\{c_i\}_{i=1}^s$.
\begin{lemma}
Under the assumptions of regularity (\ref{reg}), the local truncation error  satisfies
$$
\rho_n=\sum_{m=1}^p k^m \bigg[ \sum_{r=0}^{m-1} \big( \frac{1}{m!}-\frac{1}{r!}\sum_{l=1}^s \frac{1}{(m-r-1+l)!}\sum_{i=1}^s c_i^r a_{il} \big) A^{m-r-1}f^{(r)}(t_n) \bigg]+O(k^{p+1}).
$$
\label{lemmarhon}
\end{lemma}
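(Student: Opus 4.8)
The plan is to obtain $\rho_n=u(t_{n+1})-\bar u_{n+1}$ by Taylor-expanding both terms around $t_n$ to order $k^p$ and matching coefficients; the formulas (\ref{v0}) and (\ref{vjalpha}) were precisely designed so that $\bar u_{n+1}$ is the truncated analytic expansion of the exact increment, so such a comparison should be clean. First I would write $\bar u_{n+1}$ from (\ref{qrcont}) with $u_n$ replaced by $u(t_n)$ and observe that the two $\varphi$-operator contributions, $k^{p+1}\varphi_{p+1}(kA_0)A^{p+1}u(t_n)$ and $k\sum_{i,j=1}^s a_{ij}k^p\varphi_{p+j}(kA_0)A^pf(t_n+c_ik)$, are both $O(k^{p+1})$: the operators $\varphi_j(kA_0)$ are bounded (as recalled just after (\ref{varphi})), and $A^{p+1}u$, $A^pf$ are continuous on $[0,T]$ --- the latter, like all the quantities $A^\ell f^{(r)}(t_n)$ with $\ell+r\le p$ and $A^mu(t_n)$ with $m\le p+1$ that appear below, being uniformly bounded on $[0,T]$ as a consequence of the regularity hypothesis (\ref{reg}) together with the identity (\ref{ujt}). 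Hence only the two finite sums in (\ref{qrcont}) matter, modulo $O(k^{p+1})$.

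Next I would Taylor-expand the remaining pieces. For the exact solution, Taylor's theorem (using $u^{(p+1)}\in C([0,T],X)$) gives $u(t_{n+1})=\sum_{m=0}^p\frac{k^m}{m!}u^{(m)}(t_n)+O(k^{p+1})$, and inserting (\ref{ujt}) turns this, after reindexing the inner sum by $r=m-1-l$, into $\sum_{m=0}^p\frac{k^m}{m!}A^mu(t_n)+\sum_{m=1}^p\frac{k^m}{m!}\sum_{r=0}^{m-1}A^{m-r-1}f^{(r)}(t_n)+O(k^{p+1})$. On the other side, the first finite sum of (\ref{qrcont}) is exactly $\sum_{l=0}^p\frac{k^l}{l!}A^lu(t_n)$, which cancels the pure $A^mu(t_n)$ part above; and in the double sum each factor $A^\ell f(t_n+c_ik)$ is expanded as $\sum_{r=0}^{p-1-\ell}\frac{(c_ik)^r}{r!}A^\ell f^{(r)}(t_n)+O(k^{p-\ell})$ (the remainder controlled by the boundedness of $A^\ell f^{(p-\ell)}$), so that the generic summand $k\,a_{ij}\frac{k^\ell}{(\ell+j)!}A^\ell f(t_n+c_ik)$ contributes $\frac{a_{ij}c_i^r}{(\ell+j)!\,r!}\,k^{1+\ell+r}A^\ell f^{(r)}(t_n)$.

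Then I would collect powers of $k$. Setting $m=1+\ell+r$ identifies $\ell=m-r-1$, and as $(m,r)$ ranges over $1\le m\le p$, $0\le r\le m-1$ the index $\ell=m-r-1$ stays in $\{0,\dots,p-1\}$, so every contribution to $k^mA^{m-r-1}f^{(r)}(t_n)$ with $m\le p$ is accounted for, the rest being $O(k^{p+1})$; summing over $i$ and over the $\varphi$-index $j$ (renamed $l$) gives for it the coefficient $\frac{1}{r!}\sum_{l=1}^s\frac{1}{(m-r-1+l)!}\sum_{i=1}^s c_i^r a_{il}$. Subtracting this from the coefficient $\frac1{m!}$ coming from the exact solution yields exactly the claimed expression for $\rho_n$.

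The Taylor expansions themselves are routine; the only genuinely delicate points are, first, checking that every discarded remainder is truly $O(k^{p+1})$, which rests on reading off from (\ref{reg}) and (\ref{ujt}) that all the operators $A^\ell f^{(r)}$ ($\ell+r\le p$), $A^pf$ and $A^mu$ ($m\le p+1$) act on the given data to produce continuous, hence bounded, $X$-valued functions on $[0,T]$; and second, the index bookkeeping in the change of summation variables $(\ell,r)\leftrightarrow(m,r)$, where an off-by-one error would spoil the factorials in the final formula. I expect this last combinatorial step to be the part most worth writing out carefully.
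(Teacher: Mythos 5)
Your proposal is correct and follows essentially the same route as the paper: discard the $\varphi_{p+1}$ and $\varphi_{p+j}$ terms as $O(k^{p+1})$ using (\ref{reg}), Taylor-expand $f(t_n+c_ik)$ and $u(t_{n+1})$ (the latter via (\ref{ujt})), and reindex by the total power $m$ of $k$ to match coefficients. The index bookkeeping you flag as the delicate point ($\ell=m-r-1$, with the coefficient $\tfrac{1}{r!}\sum_l\tfrac{1}{(m-r-1+l)!}\sum_i c_i^r a_{il}$) is exactly the substitution the paper carries out.
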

\begin{proof}
Notice that $\bar{u}_{n+1}$ can be written as
\begin{eqnarray}
\bar{u}_{n+1}&=&u(t_n)+\sum_{j=1}^p k^j \bigg[ \frac{1}{j!} A^j u(t_n)+\sum_{i,l=1}^s a_{i,l} \frac{1}{(j-1+l)!}A^{j-1} f(t_n+c_i k) \bigg]+O(k^{p+1}) \nonumber \\
&=&
u(t_n)+\sum_{j=1}^p k^j \bigg[ \frac{1}{j!} A^j u(t_n)+\sum_{i,l=1}^s a_{i,l} \frac{1}{(j-1+l)!} \sum_{r=0}^{p-j} \frac{c_i^r k^r}{r!}A^{j-1} f^{(r)}(t_n) \bigg]+O(k^{p+1}) \nonumber \\
&=&
u(t_n)+\sum_{m=1}^p k^m \bigg[ \frac{1}{m!} A^m u(t_n)+\sum_{i,l=1}^s a_{i,l} \sum_{r=0}^{m-1} \frac{c_i^r}{(m-r-1+l)!r!} A^{m-r-1} f^{(r)}(t_n) \bigg]+O(k^{p+1}),
\nonumber
\end{eqnarray}
where the Taylor expansion of $f(t_n+c_i k)$ has been used as well as changes of subindexes.

As, according to (\ref{ujt}),
\begin{eqnarray}
u(t_{n+1})&=&u(t_n)+\sum_{m=1}^p \frac{k^m}{m!} u^{(m)}(t_n) \nonumber \\
&=&u(t_n)+\sum_{m=1}^p \frac{k^m}{m!} [A^m u(t_n)+\sum_{r=0}^{m-1} A^{m-r-1}f^{(r)}(t_n)]+O(k^{p+1}),
\nonumber
\end{eqnarray}
the result follows.
\end{proof}

\begin{theorem}
If $p=s$ in  (\ref{reg}) and (\ref{qrcont}), for any nodes $\{ c_i \}_{i=1}^s$, $\rho_n=O(k^{s+1})$.
\label{locps}
\end{theorem}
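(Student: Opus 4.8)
The plan is to invoke Lemma~\ref{lemmarhon} directly and to show that, when $p=s$, the entire sum $\sum_{m=1}^{p}k^m[\,\cdots\,]$ vanishes identically, so that only the $O(k^{p+1})=O(k^{s+1})$ remainder survives. Concretely, it suffices to check that for every $m$ with $1\le m\le s$ and every $r$ with $0\le r\le m-1$,
$$
\frac{1}{m!}=\frac{1}{r!}\sum_{l=1}^s\frac{1}{(m-r-1+l)!}\sum_{i=1}^s c_i^r a_{il}.
$$

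The key ingredient is the defining interpolation property of the Lagrange polynomials $l_i$. Since $\{l_i\}_{i=1}^s$ interpolate at the nodes $\{c_i\}_{i=1}^s$ and have degree $\le s-1$, for any $r$ with $0\le r\le s-1$ the monomial $\theta\mapsto\theta^r$ is reproduced exactly, namely $\theta^r=\sum_{i=1}^s c_i^r\, l_i(\theta)$. Inserting the representation (\ref{litheta}) of $l_i$ into the right-hand side and comparing the coefficients of $\theta^{l-1}/(l-1)!$ on both sides yields, for $0\le r\le s-1$,
$$
\sum_{i=1}^s c_i^r a_{il}=(l-1)!\,\delta_{l,\,r+1},\qquad l=1,\dots,s.
$$

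Now fix $m\le s$ and $r\le m-1$; then $r\le s-1$ and $r+1\le s$, so the identity above applies and the only surviving term in the inner sum over $l$ is $l=r+1$, for which $(m-r-1+l)!=m!$ and $\sum_{i}c_i^r a_{i,r+1}=r!$. Hence
$$
\sum_{l=1}^s\frac{1}{(m-r-1+l)!}\sum_{i=1}^s c_i^r a_{il}=\frac{r!}{m!},
$$
and dividing by $r!$ gives precisely $1/m!$. Therefore every bracketed coefficient in the expansion of $\rho_n$ in Lemma~\ref{lemmarhon} is zero, and $\rho_n=O(k^{s+1})$.

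There is no deep obstacle here; the argument is essentially a bookkeeping verification resting on the reproduction property of Lagrange interpolation. The only point needing a little care is matching the index ranges: one must check that $r\le m-1\le s-1$ so that $\theta^r=\sum_i c_i^r l_i(\theta)$ is legitimate (this fails for $r=s$), and that $r+1\le s$ so that the Kronecker term $\delta_{l,r+1}$ actually falls within the summation range $l=1,\dots,s$; both hold exactly because $p=s$.
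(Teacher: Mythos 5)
Your proof is correct and follows essentially the same route as the paper: both use the reproduction property $\theta^r=\sum_i c_i^r l_i(\theta)$ for $r\le s-1$ to extract $\sum_i c_i^r a_{il}=(l-1)!\,\delta_{l,r+1}$ from (\ref{litheta}) and then substitute into Lemma~\ref{lemmarhon} to make every bracketed term vanish. The index bookkeeping you spell out at the end is exactly the point the paper leaves implicit.
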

\begin{proof}
It suffices to take into account that any polynomial of degree $\le s-1$ coincides with its interpolant on the nodes $\{ c_i\}_{i=1}^s$. Therefore,  for $r\le s-1$, $\sum_{i=1}^s c_i^r l_i (\theta)=\theta^r$. Using (\ref{litheta}), this implies that
$$
\sum_{i=1}^s c_i^r \frac{a_{i,l}}{(l-1)!}=\bigg{\{} \begin{array}{lcl} 0 &\mbox{if} & l \neq r+1 \\ 1 &\mbox{if} & l=r+1, \end{array}
$$
or equivalently, for $r\le s-1$,
$$
\sum_{i=1}^s c_i^r a_{i,r+1}=r!, \quad \sum_{i=1}^s c_i^r a_{i,l}=0, \mbox{ whenever }l\neq r+1.
$$
Substituting this in the expression for $\rho_n$ in Lemma \ref{lemmarhon} with $p=s$, all the terms in brackets vanish and the result follows.
\end{proof}

\begin{theorem}
If $p=s+1$ in  (\ref{reg}) and (\ref{qrcont}) and the nodes $\{ c_i \}_{i=1}^s$ are such that the interpolatory quadrature rule which is based on them is exact for polynomials of degree $\le s$, $\rho_n=O(k^{s+2})$.
\label{locps1}
\end{theorem}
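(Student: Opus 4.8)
The plan is to feed $p=s+1$ into Lemma~\ref{lemmarhon} (whose hypothesis~(\ref{reg}) is exactly what $p=s+1$ provides), which writes
$$
\rho_n=\sum_{m=1}^{s+1} k^m \Big[ \sum_{r=0}^{m-1} \Big( \tfrac{1}{m!}-\tfrac{1}{r!}\sum_{l=1}^s \tfrac{1}{(m-r-1+l)!}\sum_{i=1}^s c_i^r a_{i,l} \Big) A^{m-r-1}f^{(r)}(t_n) \Big]+O(k^{s+2}),
$$
and then to check that every bracketed scalar coefficient vanishes under the assumption that the interpolatory rule on $\{c_i\}_{i=1}^s$ is exact for polynomials of degree $\le s$. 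Since $r$ runs up to $m-1\le s$, two regimes occur: $r\le s-1$ (which covers all terms with $m\le s$ and all but one of the $m=s+1$ terms) and the single remaining case $m=s+1$, $r=s$.

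For $r\le s-1$ I would reuse verbatim the identities established inside the proof of Theorem~\ref{locps}: because Lagrange interpolation on $\{c_i\}_{i=1}^s$ reproduces polynomials of degree $\le s-1$, one has $\sum_{i=1}^s c_i^r a_{i,r+1}=r!$ and $\sum_{i=1}^s c_i^r a_{i,l}=0$ for $l\neq r+1$. Hence in the inner sum over $l$ only $l=r+1$ survives, and since $(m-r-1+l)!=m!$ there, that term contributes $\tfrac{1}{r!}\cdot\tfrac{1}{m!}\cdot r!=\tfrac{1}{m!}$, so the whole bracket is $\tfrac{1}{m!}-\tfrac{1}{m!}=0$. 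For the last term, $m=s+1$, $r=s$, one has $m-r-1+l=l$, so the bracket equals $\tfrac{1}{(s+1)!}-\tfrac{1}{s!}\sum_{l=1}^s \tfrac{1}{l!}\sum_{i=1}^s c_i^s a_{i,l}$. The key observation is that, by (\ref{litheta}), the weights of the interpolatory quadrature rule are $w_i=\int_0^1 l_i(\theta)\,d\theta=\sum_{l=1}^s \tfrac{a_{i,l}}{l!}$, so $\sum_{l=1}^s \tfrac{1}{l!}\sum_{i=1}^s c_i^s a_{i,l}=\sum_{i=1}^s w_i c_i^s$; exactness of the rule on $\theta^s$ gives $\sum_{i=1}^s w_i c_i^s=\int_0^1 \theta^s\,d\theta=\tfrac{1}{s+1}$, whence the bracket becomes $\tfrac{1}{(s+1)!}-\tfrac{1}{s!(s+1)}=0$. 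Therefore every term of the expansion in Lemma~\ref{lemmarhon} vanishes and $\rho_n=O(k^{s+2})$.

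There is no deep obstacle here; the only points needing care are the factorial bookkeeping in the shifted index $m-r-1+l$ and the recognition that the combination $\sum_{l=1}^s a_{i,l}/l!$ is precisely the interpolatory quadrature weight $w_i$ — this is what converts the ``degree-$s$ exactness'' hypothesis into the single scalar identity that kills the one coefficient not already annihilated by the degree-$(s-1)$ reproduction identities from Theorem~\ref{locps}.
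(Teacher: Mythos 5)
Your proof is correct and follows essentially the same route as the paper: it reuses the degree-$(s-1)$ reproduction identities from Theorem~\ref{locps} to kill all brackets with $r\le s-1$, and then handles the single remaining case $m=s+1$, $r=s$ by recognizing $\sum_{l=1}^s a_{i,l}/l!$ as the quadrature weight and invoking exactness on $\theta^s$. The only difference is that you write out the factorial bookkeeping more explicitly than the paper does.
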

\begin{proof} With the same argument as in the previous lemma, all the terms in brackets in the expression of $\rho_n$ in Lemma \ref{lemmarhon} vanish for $m\le s$. Then, for $m=s+1$, the term in parenthesis vanishes for the same reason when $r \le s-1$. It just suffices to see what happens when $m=s+1$ and $r=s$. But, as the quadrature rule which is based on $\{c_i\}_{i=1}^s$ is assumed to be exact for the polynomial $\theta^s$,
$$\frac{1}{s+1}=\int_{0}^1 \theta^s d\theta=\int_{0}^1\sum_{i=1}^s c_i^s l_i(\theta) d \theta=\sum_{i=1}^s c_i^s \sum_{l=1}^s \frac{a_{i,l}}{l!}.$$
From this, the result also directly follows.
\end{proof}
We now state the following much more general result:
\begin{theorem}
Whenever the nodes $\{ c_i \}_{i=1}^s$ are such that the interpolatory quadrature rule which is based on them is exact for polynomials of degree $\le p-1$, considering that value of $p$ in (\ref{reg}) and (\ref{qrcont}), $\rho_n=O(k^{p+1})$.
\label{locp}
\end{theorem}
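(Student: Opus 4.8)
The plan is to read off the coefficients in the expansion of $\rho_n$ provided by Lemma \ref{lemmarhon} and to show that the stated exactness hypothesis forces every one of them to vanish, so that only the remainder $O(k^{p+1})$ survives. Concretely, for each pair of indices $(m,r)$ with $1\le m\le p$ and $0\le r\le m-1$ I must establish
\[
\frac{1}{m!}=\frac{1}{r!}\sum_{l=1}^s\frac{1}{(m-r-1+l)!}\sum_{i=1}^s c_i^r a_{i,l}.
\]

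The first step is to turn both sides of this identity into integrals of the same kind as the $\varphi_j$ in (\ref{varphi}). From the recursion (\ref{recurf}) one gets the scalar power series $\varphi_j(z)=\sum_{q\ge0}z^q/(q+j)!$, and, as recalled in Section \ref{prel}, $\varphi_j(z)=\int_0^1 e^{z(1-\theta)}\theta^{j-1}/(j-1)!\,d\theta$ together with $b_i(z)=\sum_{l=1}^s a_{i,l}\varphi_l(z)=\int_0^1 e^{z(1-\theta)}l_i(\theta)\,d\theta$. Expanding $e^{z(1-\theta)}$ and matching the coefficient of $z^{m-r-1}$ in these identities yields, on the one hand,
\[
\sum_{l=1}^s\frac{a_{i,l}}{(m-r-1+l)!}=\int_0^1\frac{(1-\theta)^{m-r-1}}{(m-r-1)!}\,l_i(\theta)\,d\theta ,
\]
and, applying the same to $\varphi_{r+1}$, on the other hand $\frac{1}{m!}=\frac{1}{r!}\int_0^1\frac{(1-\theta)^{m-r-1}}{(m-r-1)!}\,\theta^r\,d\theta$. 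Subtracting, the quantity that must be shown to vanish becomes
\[
\frac{1}{r!}\int_0^1\frac{(1-\theta)^{m-r-1}}{(m-r-1)!}\Big[\theta^r-\sum_{i=1}^s c_i^r\,l_i(\theta)\Big]\,d\theta .
\]

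Now $\sum_{i=1}^s c_i^r l_i(\theta)$ is the Lagrange interpolant of $\theta^r$ at the nodes $\{c_i\}_{i=1}^s$; since $l_i(c_j)=\delta_{ij}$, the bracket $\theta^r-\sum_i c_i^r l_i(\theta)$ vanishes at every $c_i$, and it is either identically zero (when $r\le s-1$) or a polynomial of degree exactly $r$ (when $r\ge s$, because the interpolant has degree at most $s-1$). Hence the whole integrand $\frac{(1-\theta)^{m-r-1}}{(m-r-1)!}[\theta^r-I(\theta^r)]$ is a polynomial that vanishes at all the nodes and whose degree is at most $(m-r-1)+r=m-1\le p-1$. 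Since by hypothesis the interpolatory quadrature rule attached to $\{c_i\}_{i=1}^s$ is exact for polynomials of degree $\le p-1$, and that rule evaluates a polynomial only through its values at the nodes, the integral over $[0,1]$ of such a polynomial is zero. Therefore every coefficient in Lemma \ref{lemmarhon} vanishes and $\rho_n=O(k^{p+1})$; the particular choices $p=s$ and $p=s+1$ recover Theorems \ref{locps} and \ref{locps1}.

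The essentially routine parts are the coefficient matching that produces the $\varphi$-type integrals and the degree count; the only point needing a little care is that in the range $r\le s-1$ the interpolation error of $\theta^r$ is literally the zero polynomial (not merely divisible by $\prod_i(\theta-c_i)$), so that the argument covers all admissible $(m,r)$ uniformly. I do not anticipate any deeper obstacle: the hypothesis ``exact up to degree $p-1$'' is tailored precisely so that the highest-degree integrand occurring (degree $m-1$ with $m\le p$) is still integrated exactly.
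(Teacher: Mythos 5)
Your proof is correct and follows essentially the same route as the paper: both reduce the vanishing of each coefficient in Lemma \ref{lemmarhon} to the identity $\int_0^1 \frac{(1-\theta)^{m-r-1}}{(m-r-1)!}\bigl[\theta^r-\sum_i c_i^r l_i(\theta)\bigr]\,d\theta=0$, the paper writing this weighted integral as an $(m-r)$-fold iterated integral while you derive the weight from the power series of $\varphi_j$ and $b_i$. Your justification of why exactness of degree $\le p-1$ forces this integral to vanish (the integrand is a polynomial of degree $\le m-1\le p-1$ vanishing at all the nodes) is exactly the step the paper compresses into the phrase ``due to the hypothesis,'' so if anything your write-up is slightly more explicit.
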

\begin{proof}
It suffices to notice that, for $0\le r\le m-1$, with $m\le p$, due to the hypothesis,
\begin{eqnarray}
\lefteqn{\hspace{-2cm}\int_0^1 \int_0^{u_1} \dots \int_0^{u_{m-r-1}} \theta^r d\theta d u_{m-r-1}\dots du_1}
\nonumber \\
\hspace{2cm}&=&\int_0^1 \int_0^{u_1} \dots \int_0^{u_{m-r-1}} \sum_{i=1}^s c_i^r l_i(\theta) d\theta d u_{m-r-1}\dots du_1.
\nonumber
\end{eqnarray}
Now, the left-hand side term above can inductively be proved to be
$$
\frac{1}{r+1} \frac{1}{r+2} \dots \frac{1}{m},
$$
and the right-hand side can be written as
\begin{eqnarray}
\lefteqn{\int_0^1 \int_0^{u_1} \dots \int_0^{u_{m-r-1}} \sum_{i=1}^s c_i^r \sum_{l=1}^s a_{i,l} \frac{\theta^{l-1}}{(l-1)!} d\theta d u_{m-r-1} \dots du_1} \nonumber \\
&=&\sum_{l=1}^s \big( \sum_{i=1}^s c_i^r a_{i,l}\big) \int_0^1 \int_0^{u_1} \dots \int_0^{u_{m-r-1}} \frac{\theta^{l-1}}{(l-1)!} d\theta d u_{m-r-1} \dots du_1 \nonumber \\
&=&\sum_{l=1}^s \big( \sum_{i=1}^s c_i^r a_{i,l}\big) \frac{1}{(l-1+m-r)!}.
\nonumber
\end{eqnarray}
Then, using Lemma \ref{lemmarhon}, the result directly follows.
\end{proof}
From this, the following interesting results are achieved:
\begin{corollary}
\label{corol}
\begin{list}{}{}
\item[(i)] For the $s$ nodes corresponding to a Gaussian quadrature rule, considering $p=2s$ in (\ref{reg}) and (\ref{qrcont}), $\rho_n=O(k^{2s+1})$.
\item[(ii)]For the $s$ nodes corresponding to a Gaussian-Lobatto quadrature rule, considering $p=2s-2$ in (\ref{reg}) and (\ref{qrcont}), $\rho_n=O(k^{2s-1})$.
\end{list}
\end{corollary}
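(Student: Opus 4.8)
The plan is simply to feed the classical degrees of exactness of the two quadrature families into Theorem \ref{locp}. That theorem says that if the interpolatory rule built on the nodes $\{c_i\}_{i=1}^s$ is exact for all polynomials of degree $\le p-1$ and the regularity (\ref{reg}) holds with that same $p$ in (\ref{qrcont}), then $\rho_n=O(k^{p+1})$. So the entire argument reduces to choosing $p$ as large as the degree of exactness permits.

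First I would recall that the $s$-node Gaussian quadrature rule on $[0,1]$ is exact for polynomials of degree $\le 2s-1$. Applying Theorem \ref{locp} with $p-1=2s-1$, i.e. $p=2s$, and invoking (\ref{reg}) with that value of $p$, one gets $\rho_n=O(k^{p+1})=O(k^{2s+1})$, which is part (i). For part (ii) I would recall that the $s$-node Gauss--Lobatto rule (two fixed endpoints plus $s-2$ optimally placed interior nodes, $s\ge 2$) is exact for polynomials of degree $\le 2s-3$; applying Theorem \ref{locp} with $p-1=2s-3$, i.e. $p=2s-2$, and (\ref{reg}) with that $p$, yields $\rho_n=O(k^{p+1})=O(k^{2s-1})$.

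There is essentially no hard step here: both claims are immediate corollaries of Theorem \ref{locp} once the index bookkeeping is done correctly. The only point that requires a word of care is that the value of $p$ appearing in the regularity hypothesis (\ref{reg}) and in the scheme (\ref{qrcont}) must be taken equal to $2s$ (resp.\ $2s-2$); this is a hypothesis on the data, so nothing needs to be proved, but it should be stated explicitly so that the reader sees the price paid for the higher order is more temporal-spatial regularity of the solution. I would also note in passing the requirement $s\ge 2$ in the Gauss--Lobatto case so that the node set is well defined.

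If desired, the degree-of-exactness facts themselves can be made self-contained by citing the standard orthogonality argument (the Gauss nodes are the roots of the degree-$s$ Legendre-type polynomial orthogonal on $[0,1]$, and the Lobatto nodes are the two endpoints together with the roots of the corresponding degree-$(s-2)$ polynomial orthogonal with weight $\theta(1-\theta)$), but since these are entirely classical I would simply quote them.
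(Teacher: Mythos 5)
Your proposal is correct and matches the paper exactly: the corollary is stated immediately after Theorem \ref{locp} with no separate proof, precisely because it follows by inserting the classical degrees of exactness ($2s-1$ for Gauss, $2s-3$ for Gauss--Lobatto) into that theorem, which is what you do. Your added remarks on the regularity cost and the $s\ge 2$ caveat are sensible but not part of the paper's (implicit) argument.
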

\begin{remark}
\label{gl}
Due to the fact that the last node of one step is the first of the following, the nodes corresponding to the Gaussian-Lobatto quadrature rule have the advantage that just $s(s-1)$ (instead of $s^2$)  terms of the form $V_{h,n,j,i}$ must be calculated in (\ref{ff}).

\end{remark}

\subsection{Full discretization error}
Let us also consider the error which arises when discretizing (\ref{ecv0}) and (\ref{ecvj}) in space.
\subsubsection{Local error}
To define the local error after full discretization, we consider
$$
\bar{U}_{h,n+1}=\bar{V}_{h,n,0}(k)+k \sum_{i,j=1}^s a_{i,j} \bar{V}_{h,n,j,i}(k),
$$
where
\begin{enumerate}
\item[(i)]
$\bar{V}_{h,n,0}(\tau)$ is the solution of
\begin{eqnarray}
\bar{V}_{h,n,0}'(\tau)+L_h Q_h \partial \hat{v}_{n,0}'(\tau)&=&A_{h,0} \bar{V}_{h,n,0}(\tau)+ A_h Q_h \partial \hat{v}_{n,0}(\tau), \nonumber \\
\bar{V}_{h,n,0}(0)&=&R_h u(t_n), \nonumber
\end{eqnarray}
with $\hat{v}_{n,0}(\tau)=\sum_{l=0}^p \frac{\tau^l}{l!} A^l u(t_n)$.
\item[(ii)]
$\bar{V}_{h,n,j,i}(\tau)$ is the solution of (\ref{ecVhj}) substituting $P_h f(t_n+c_i k)$ by $R_h f(t_n+c_i k)$. More precisely,
\begin{eqnarray}
\bar{V}_{h,j,n,i}'(\tau)&=& (A_{h,0}-\frac{j}{\tau} I) \bar{V}_{h,j,n,i}(\tau)+A_h Q_h \partial \hat{v}_{j,n,i}(\tau)
\nonumber \\
&&+\frac{1}{(j-1)!\tau}R_h f(t_n+c_i k) \nonumber \\
&&+L_h Q_h \partial[\frac{1}{(j-1)!\tau} f(t_n+c_i k)-\frac{j}{\tau} \hat{v}_{j,n,i}(\tau)- \hat{v}_{j,n,i}'(\tau)], \nonumber \\
\bar{V}_{h,j,n,i}(0)&=&\frac{1}{j!} R_h f(t_n+c_i k).
\label{ecbVhj}
\end{eqnarray}
\end{enumerate}
Then, we define $\rho_{h,n}=R_h u(t_{n+1})-\bar{U}_{h,n+1}$ and the following is satisfied.
\begin{theorem}
\label{theolocalerrorfull}
Let us assume that, apart from hypotheses of Section \ref{prel}, $u$ and $f$ in (\ref{laibvp}) satisfy
\begin{eqnarray}
 A^j u  \in C([0,T],Z), \quad  j=0,\dots,p+1,  \qquad  A^j f \in C([0,T], Z), \quad  j=
0,\dots,p.
\label{regs}
\end{eqnarray}
Then, $\rho_{h,n}=O(k \varepsilon_h+\|\rho_n\|),$ where the constant in Landau notation is independent of $k$ and $h$ and the bounds in Section \ref{tde} hold for $\rho_n$.
\end{theorem}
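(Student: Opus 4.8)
The plan is to compare $\bar U_{h,n+1}$ with the spatial elliptic projection of the continuous numerical approximation $\bar u_{n+1}$ from (\ref{qrcont}), and then use the triangle inequality
$$
\rho_{h,n}=R_h u(t_{n+1})-\bar U_{h,n+1}
=\big[R_h u(t_{n+1})-R_h\bar u_{n+1}\big]+\big[R_h\bar u_{n+1}-\bar U_{h,n+1}\big].
$$
The first bracket is exactly $R_h\rho_n$, which is $O(\|\rho_n\|)$ since $R_h$ is bounded (hypothesis (H1)--(H2) of \cite{acr2}); this is where the bounds of Section \ref{tde} enter. So the work is to show the second bracket is $O(k\varepsilon_h)$. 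The key observation is that $\bar V_{h,n,0}(k)$ and $\bar V_{h,j,n,i}(k)$ are, by construction, the space discretizations of the differential problems (\ref{ecv0}) and (\ref{ecvj}) with $\alpha=u(t_n)$ and $\alpha=f(t_n+c_ik)$ respectively; their closed-form solutions are given by the analogues of (\ref{Vh0}) and (\ref{Vhij}) with $P_h$ replaced by $R_h$, i.e.\ the formulas already derived in Section 4 but starting from $R_hu(t_n)$, $R_hf(t_n+c_ik)$ instead of $P_hu(t_n)$, $P_hf(t_n+c_ik)$.

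The strategy for the second bracket is therefore to subtract, term by term in (\ref{qrcont}), the closed form of $\bar V_{h,n,0}(k)+k\sum_{i,j}a_{ij}\bar V_{h,j,n,i}(k)$ from the elliptic projection of the exact $\bar u_{n+1}$. The \emph{continuous} building blocks are the finite Taylor sums $\sum_{l=0}^{p}\frac{k^l}{l!}A^lu(t_n)$ etc.\ together with the tail terms $k^{p+1}\varphi_{p+1}(kA_0)A^{p+1}u(t_n)$ and $k^p\varphi_{p+j}(kA_0)A^pf(t_n+c_ik)$. Their spatial counterparts involve $\varphi_j(kA_{h,0})$, $A_hQ_h\partial A^l(\cdot)$ and $L_hQ_h\partial A^l(\cdot)$ acting on $u(t_n)$ and the $f(t_n+c_ik)$. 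After applying $R_h$ (and using $R_h=P_h+L_hQ_h\partial$ on the relevant arguments), every discrepancy reduces to one of two elementary types: differences $\varphi_j(kA_0)w$ versus $\varphi_j(kA_{h,0})P_hw$ of the semigroup-type operators, and differences $R_h-P_h$ hitting the regular elements $A^lu(t_n)$, $A^lf(t_n+c_ik)\in Z$. By the regularity hypothesis (\ref{regs}) all the arguments lie in $Z$, hypothesis (\ref{epsi}) bounds $\|A_{h,0}(R_h-P_h)w\|_h\le\varepsilon_h\|w\|_Z$, and the consistency of the scheme $A_{h,0}R_h+A_hQ_h\partial=L_hA$ on $D(A)$ (cf.\ (\ref{elipp})) lets one convert the $A_hQ_h\partial$ terms into $L_hA$ plus $A_{h,0}R_h$ terms whose errors are controlled the same way. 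The boundedness of $\varphi_j(\tau A_{h,0})$, $e^{\tau A_{h,0}}$, $L_h$, $L_hQ_h$ uniformly in $h$ (and hypothesis (HS) for the one $A_{h,0}^{-1}A_hQ_h$ factor when it appears) then yields that each term of the difference, all of which carry a factor $k$ from the $\varphi_j$ normalization in (\ref{qrcont})--(\ref{qr}), is $O(k\varepsilon_h)$; summing the finitely many terms preserves this.

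The main obstacle I anticipate is purely bookkeeping: there are many indices ($n$, $i$, $j$, $l$, powers of $A$) and one must match the continuous expansion in (\ref{qrcont}) against the derived closed forms (\ref{Vh0}), (\ref{Vhij}) carefully enough to see that the \emph{only} surviving terms are those proportional to $(R_h-P_h)$ applied to $Z$-regular elements (all the \lq\lq pure\rq\rq\ $P_h$-parts cancel exactly by the same computation that produced (\ref{Vhij}) in the first place). In particular one must check that the tail term $k^{p+1}\varphi_{p+1}(kA_{h,0})$, which in (\ref{Vh0}) is paired with $A_hQ_h\partial A^pu(t_n)$, is correctly identified so that the $A^{p}u(t_n)\in Z$ regularity and (HS)/(\ref{epsi}) suffice; similarly for the $\varphi_{p+j}(kA_{h,0})A_hQ_h\partial A^{p-1}f$ terms in (\ref{Vhij}). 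Once the algebraic matching is in place, the estimate $\rho_{h,n}=O(k\varepsilon_h)+O(\|\rho_n\|)$ follows immediately from (\ref{epsi}), (\ref{elipp}), (HS) and the uniform boundedness of the discrete exponential and $\varphi_j$ operators, which completes the proof.
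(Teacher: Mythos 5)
Your opening decomposition is exactly the paper's: $\rho_{h,n}=R_h\rho_n+(R_h\bar u_{n+1}-\bar U_{h,n+1})$, with the first piece controlled through the regularity (\ref{regs}) (note that $R_h\rho_n=O(\|\rho_n\|)$ is \emph{not} a consequence of $R_h$ being bounded on $X$; the paper needs $\rho_n\in Z$ with $\|\rho_n\|_Z=O(\|\rho_n\|)$, which is where (\ref{regs}) enters, so your parenthetical justification is slightly off even though the conclusion is right). Where you diverge is in the second bracket. The paper does not match closed-form expansions term by term. Instead it works at the level of the differential equations: applying $R_h$ to the equation of Lemma \ref{lemvj} and using (\ref{elipp}) to write $R_hAv_{j,n,i}=A_{h,0}R_hv_{j,n,i}+A_hQ_h\partial v_{j,n,i}-L_hQ_h\partial Av_{j,n,i}+(R_h-P_h)Av_{j,n,i}$, one finds that $R_hv_{j,n,i}-\bar V_{h,j,n,i}$ solves the same linear ODE as $\bar V_{h,j,n,i}$ with zero initial datum, all boundary contributions cancelling identically, and the single residual source $(R_h-P_h)Av_{j,n,i}(\tau)=O(\varepsilon_h)$; Duhamel then gives $k\varphi_{j+1}(kA_{h,0})O(\varepsilon_h)=O(k\varepsilon_h)$. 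This is both shorter and sharper than bookkeeping the expansions.

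The genuine gap in your version is the sentence asserting that the discrepancies ``$\varphi_j(kA_0)w$ versus $\varphi_j(kA_{h,0})P_hw$'' are then controlled by ``the uniform boundedness of the discrete exponential and $\varphi_j$ operators.'' That comparison is a \emph{convergence} estimate for the spatially discretized semigroup, not a boundedness statement, and it is precisely the content that the ODE-plus-Duhamel argument supplies. As written, your plan defers the only nontrivial analytic step to an unproved claim; to fill it you would end up re-deriving the paper's comparison of the two initial value problems (or an equivalent resolvent/Duhamel identity for $\varphi_j(kA_0)-\varphi_j(kA_{h,0})P_h$). Everything else in your outline — the cancellation of the ``pure $P_h$ parts,'' the role of (\ref{epsi}) on $Z$-regular elements, the factor $k$ coming from the quadrature weights — is consistent with the paper, so the fix is localized: replace the term-by-term matching of (\ref{qrcont}) against (\ref{Vh0}) and (\ref{Vhij}) by the subtraction of the two evolution equations before solving them.
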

\begin{proof}
Because of definition,
\begin{eqnarray}
\rho_{h,n}&=&(R_h u(t_{n+1})-R_h \bar{u}_{n+1})+(R_h \bar{u}_{n+1}-\bar{U}_{h,n+1})
\nonumber \\
&=&R_h \rho_n+(R_h \bar{u}_{n+1}-\bar{U}_{h,n+1}),
\label{pr1}
\end{eqnarray}
where $\bar{u}_n$ and $\rho_n$ are those defined in Section \ref{tde}. The fact that (\ref{regs}) is satisfied implies that $\bar{u}_{n+1}$ belongs to $Z$ and therefore $\rho_n \in Z$. Moreover, $\|\rho_n\|_Z=O(\|\rho_n\|)$ and, using the same proof as that of Theorem 11 in \cite{acr2},
\begin{eqnarray}
R_h \rho_n=O(\|\rho_n\|).
\label{pr2}
\end{eqnarray}
On the other hand,
\begin{eqnarray}
R_h \bar{u}_{n+1}-\bar{U}_{h,n+1}=R_h \bar{v}_{0,n}-\bar{V}_{h,n,0}(k)+k\sum_{i,j=1}^s a_{i,j}[ R_h v_{j,n,i}(k)-\bar{V}_{h,j,n,i}(k)],
\label{pr3}
\end{eqnarray}
where $\bar{v}_{0,n}$ corresponds to (\ref{v0}) with $\alpha=u(t_n)$  and $v_{j,n,i}(\tau)$ corresponds to (\ref{vjalpha}) with $\alpha=f(t_n+c_i k)$.
In the same way as in the proof of Theorem 4.4 in \cite{acr2},
\begin{eqnarray}
R_h \bar{v}_{0,n}-\bar{V}_{h,n,0}(k)=O(k \varepsilon_h).
\label{pr4}
\end{eqnarray}
Moreover, using Lemma \ref{lemvj},
\begin{eqnarray}
R_h v_{j,n,i}'(\tau)&=& R_h A v_{j,n,i}(\tau)-\frac{j}{\tau}R_h v_{j,n,i}(\tau)+\frac{1}{(j-1)!\tau} R_h f(t_n+c_i k) \nonumber \\
&=& P_h A v_{j,n,i}(\tau)+(R_h-P_h) A v_{j,n,i}(\tau)-\frac{j}{\tau} R_h v_{j,n,i}(\tau)+\frac{1}{(j-1)! \tau} R_h f(t_n+c_i k) \nonumber \\
&=& A_{h,0} R_h v_{j,n,i}(\tau)+A_h Q_h \partial \hat{v}_{j,n,i}(\tau)-L_h Q_h \partial A v_{j,n,i}(\tau)
\nonumber \\
&&+(R_h-P_h) A v_{j,n,i}(\tau)-\frac{j}{\tau} R_h v_{j,n,i}(\tau)+\frac{1}{(j-1)! \tau} R_h f(t_n+c_i k), \nonumber
\end{eqnarray}
and making the difference with (\ref{ecbVhj}), it follows that
\begin{eqnarray}
R_h v_{j,n,i}'(\tau)-\bar{V}_{h,n,j,i}'(\tau)=(A_{h,0}-\frac{j}{\tau}I)(R_h v_{j,n,i}(\tau)-V_{h,n,j,i}(\tau))+(R_h -P_h) A v_{j,n,i},
\nonumber
\end{eqnarray}
where we have used that
$$\partial[A v_{j,n,i}(\tau)+\frac{1}{(j-1)! \tau} f(t_n+c_i k)-\frac{j}{\tau} \partial \hat{v}_{j,n,i}(\tau)-\partial \hat{v}_{j,n,i}'(\tau)]=0$$
because of Lemma \ref{lemvj}. Now, due to the same lemma and (\ref{ecbVhj}), $R_h v_{j,n,i}(0)-\bar{V}_{h,n,j,i}(0)=0$, and therefore
\begin{eqnarray}
R_h v_{j,n,i}(k)-\bar{V}_{h,n,j,i}(k)&=&\int_0^k e^{(k-\tau)A_{h,0}}\frac{\tau^j}{k^j}(R_h-P_h) A v_{j,n,i}(\tau) d \tau
\nonumber \\
&=& k \varphi_{j+1}(k A_{h,0}) O( \varepsilon_h)= O(k \varepsilon_h). \label{pr5}
\end{eqnarray}
Here we have used that $A v_{j,n,i} \in Z$ because of (\ref{regs}) and Lemma 3.3 in \cite{acr2}. Finally, gathering (\ref{pr1})--(\ref{pr5}), the result follows.
\end{proof}

\subsubsection{Global error}
We now study the global error, which we define as $e_{h,n}=P_h u(t_n)-U_{h,n}$.

\begin{theorem}
\label{theoglobalerrorfull}
Under the same assumptions of Theorem \ref{theolocalerrorfull},
\begin{eqnarray*}
e_{h,n}=O( \frac{1}{k} \max_{0\le l \le n-1} \|\rho_l\|+\varepsilon_h),
\end{eqnarray*}
where the constant in Landau notation is independent of $k$ and $h$ and the bounds in Section \ref{tde} hold for $\rho_l$.
\end{theorem}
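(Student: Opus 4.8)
The plan is to derive a recursion for the global error $e_{h,n}=P_hu(t_n)-U_{h,n}$ and then apply a discrete Gronwall / telescoping argument. First I would relate $e_{h,n+1}$ to $e_{h,n}$ by inserting the intermediate quantity $\bar U_{h,n+1}$ (the full-discretization formula applied from the exact starting value $P_hu(t_n)$, or rather from the elliptic projection as in Theorem \ref{theolocalerrorfull}), writing
\begin{eqnarray}
P_hu(t_{n+1})-U_{h,n+1}=[P_hu(t_{n+1})-\bar U_{h,n+1}]+[\bar U_{h,n+1}-U_{h,n+1}].
\nonumber
\end{eqnarray}
The first bracket is essentially the local error $\rho_{h,n}$ bounded in Theorem \ref{theolocalerrorfull}, modulo the discrepancy $R_hu(t_n)-P_hu(t_n)$ between the two projections of the starting value, which by (\ref{epsi}) contributes $O(\varepsilon_h)$ after application of $A_{h,0}^{-1}$ — this needs to be tracked carefully. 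The second bracket is the result of running the scheme (\ref{ff}) one step from two different initial data, so by linearity it equals the ``propagated'' error: one must show that the map sending the starting value to $U_{h,n+1}$ (with the boundary data frozen) is, up to the $\varphi_j$ and $e^{kA_{h,0}}$ factors in (\ref{Vh0})--(\ref{Vhij}), a bounded linear operator with operator norm $1+O(k)$, using hypotheses (H1)--(H2) from \cite{acr2} and the boundedness of $\varphi_j(kA_{h,0})$ on the relevant sector.

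Next I would unfold the recursion. Writing $e_{h,n+1}=S_{h}e_{h,n}+r_{h,n}$ where $S_h$ is the one-step propagator (close to $e^{kA_{h,0}}$) and $r_{h,n}$ collects the local error $\rho_{h,n}$ together with the $O(\varepsilon_h)$ projection mismatch, telescoping gives
\begin{eqnarray}
e_{h,n}=\sum_{l=0}^{n-1}\Big(\prod_{m=l+1}^{n-1}S_h\Big)r_{h,l}.
\nonumber
\end{eqnarray}
Since $n\le T/k$ and each $\|S_h\|_h\le 1+Ck$, the product of propagators is uniformly bounded by $e^{CT}$, so $\|e_{h,n}\|_h\le C\sum_{l=0}^{n-1}\|r_{h,l}\|_h\le C\,n\,\max_l\|r_{h,l}\|_h$. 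Plugging in $\|r_{h,l}\|_h=O(\|\rho_l\|+k\varepsilon_h)$ from Theorem \ref{theolocalerrorfull} and using $n=O(1/k)$ yields $e_{h,n}=O(\frac{1}{k}\max_l\|\rho_l\|+\varepsilon_h)$, which is the claim. The $O(\varepsilon_h)$ (rather than $O(\varepsilon_h/k)$) contribution from the space-discretization part of the local error must be handled separately, exactly as in part (iii) of Theorems \ref{teorcavbc} and \ref{teorcavbcsp}: the $\varepsilon_h$ pieces come from a defect of the form $(R_h-P_h)Av_{j,n,i}$ integrated against $e^{(k-\tau)A_{h,0}}$, and accumulating these over $n$ steps and using a smoothing/summation bound (or simply that $\int_0^{t_n}e^{(t_n-s)A_{h,0}}\,ds$ is bounded) keeps the factor at $O(\varepsilon_h)$, not $O(\varepsilon_h/k)$.

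The main obstacle is the first step: showing the one-step propagator $S_h$ has norm $1+O(k)$ uniformly in $h$, because $S_h$ is not simply $e^{kA_{h,0}}$ — the terms $V_{h,n,0}(k)$ and $V_{h,j,n,i}(k)$ in (\ref{Vh0})--(\ref{Vhij}) involve $A_hQ_h\partial A^{j}$ acting on the starting value's boundary traces, and $A_hQ_h$ is \emph{not} bounded in $h$. One must verify that in the error propagation these unbounded pieces only ever appear multiplied by a factor $A_{h,0}^{-1}$ coming from an extra $\varphi_{j}$ or from hypothesis (HS) — i.e. that the combination $A_{h,0}^{-1}A_hQ_h$ is what actually occurs — so that (HS) delivers the needed uniform bound. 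This is the same delicate point that forced Theorems \ref{teorcanvbcsp} to state bounds on $A_{h,0}^{-1}\rho_{h,n}$ rather than $\rho_{h,n}$ directly, and the argument here must mirror that bookkeeping; once it is in place, the Gronwall step is routine.
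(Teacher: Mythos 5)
Your overall skeleton (insert the one-step scheme started from the projected exact solution, identify a local-error term plus a propagated term, telescope, use stability) is the same as the paper's. But the step you flag as the ``main obstacle'' is where your proposal is actually incomplete, and the resolution you sketch points in the wrong direction. The key observation in the paper is that the one-step map's dependence on the previous numerical value is \emph{exactly} $e^{kA_{h,0}}$: in (\ref{Vh0}) and (\ref{Vhij}) every term involving $A_hQ_h$ or $L_hQ_h$ acts on boundary traces of the \emph{exact} data ($\partial A^{j}u(t_n)$ and $\partial A^{l}f(t_n+c_ik)$, which are computable from $g$ and $f$ via (\ref{ujt})), never on the numerical solution. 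Hence these terms are identical in $\bar V_{h,n,0}$ and $V_{h,n,0}$ and cancel in the difference, giving
\begin{equation*}
\bar V_{h,n,0}(k)-V_{h,n,0}(k)=e^{kA_{h,0}}\bigl(R_hu(t_n)-U_{h,n}\bigr),
\end{equation*}
while $\bar V_{h,j,n,i}(k)-V_{h,j,n,i}(k)=\varphi_j(kA_{h,0})(R_h-P_h)f(t_n+c_ik)=O(\varepsilon_h)$, contributing $O(k\varepsilon_h)$ per step. There is therefore no unbounded $A_hQ_h$ in the error propagation at all: no need for hypothesis (HS), no need to exhibit a factor $A_{h,0}^{-1}A_hQ_h$, and no $1+O(k)$ operator-norm estimate — stability is just the uniform boundedness of $e^{kA_{h,0}}$ from (H1)--(H2). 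Your analogy with Theorem \ref{teorcanvbcsp} is misleading here; that difficulty is specific to the classical (space-first) approach, and avoiding it is precisely the point of the time-first construction.

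Two smaller bookkeeping differences: the paper runs the recursion on $R_hu(t_n)-U_{h,n}$ rather than on $e_{h,n}=P_hu(t_n)-U_{h,n}$, converting between the two only once at the end via $P_h-R_h=O(\varepsilon_h)$ (and using $U_{h,0}=P_hu_0$ so that the initial term $e^{t_{n+1}A_{h,0}}(R_hu(0)-U_{h,0})=O(\varepsilon_h)$); and the per-step $O(k\varepsilon_h)$ contributions sum over $n=O(1/k)$ steps to $O(\varepsilon_h)$ directly, so no separate smoothing or summation-by-parts argument is needed for the $\varepsilon_h$ part. With the cancellation above supplied, your Gronwall/telescoping step goes through as you describe and yields the stated bound.
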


\begin{proof} As in the proof of Theorem 4.5 in \cite{acr2},
\begin{eqnarray}
e_{h,n+1}&=&(P_hu(t_{n+1})-R_h u(t_{n+1}))+R_hu(t_{n+1})-U_{h,n+1} \nonumber \\
&=&O(\varepsilon_h)+R_hu(t_{n+1})-U_{h,n+1}.
\label{fdecomp}
\end{eqnarray}
The difference is that now, using (\ref{ff}),
\begin{eqnarray*}
R_hu(t_{n+1})-U_{h,n+1}
&=&\rho_{h,n}+\overline{U}_{h,n+1}-U_{h,n+1}
\nonumber \\
&=&\rho_{h,n}+\bar{V}_{h,n,0}(k)-V_{h,n,0}(k)+k \sum_{i,j=1}^s a_{ij} (\bar{V}_{h,j,n,i}(k)-V_{h,j,n,i}(k)).
\end{eqnarray*}
As in \cite{acr2},
\begin{eqnarray*}
\bar{V}_{h,n,0}-V_{h,n,0}=e^{kA_{h,0}}(R_hu(t_n)-U_{h,n}).
\end{eqnarray*}
As for $\bar{V}_{h,j,n,i}(k)-V_{h,j,n,i}(k)$, making the difference between (\ref{ecbVhj}) and (\ref{ecVhj}),
\begin{eqnarray}
\bar{V}_{h,j,n,i}'(\tau)-V_{h,j,n,i}'(\tau)
&=&(A_{h,0}-\frac{j}{\tau})(\bar{V}_{h,j,n,i}(\tau)-V_{h,j,n,i}(\tau))+\frac{1}{(j-1)!\tau}(R_h-P_h) f(t_n+c_i k),
\nonumber \\
\bar{V}_{h,j,n,i}(0)-V_{h,j,n,i}(0)&=&\frac{1}{j!}(R_h-P_h) f(t_n+c_i k).
\nonumber
\end{eqnarray}
Considering then an analogue of Lemma \ref{lemphi0} substituting $A_0$ by $A_{h,0}$  and taking into account that $\varphi_j(0)=1/j!$ (\ref{recurf}),
\begin{eqnarray*}
\bar{V}_{h,j,n,i}(k)-V_{h,j,n,i}(k)=\varphi_j(kA_{h,0})(R_h-P_h)f(t_n+c_i k)=O(\varepsilon_h).
\end{eqnarray*}
Therefore,
\begin{eqnarray*}
R_hu(t_{n+1})-U_{h,n+1}= e^{k A_{h,0}}(R_h u(t_n)-U_{h,n})+\rho_{h,n}+O(k \varepsilon_h).
\end{eqnarray*}
This implies that
$$
R_h u(t_{n+1})-U_{h,n+1}=e^{t_{n+1} A_{h,0}}(R_h u(0)-U_{h,0})+O( \frac{1}{k} \max_{0\le l \le n} \|\rho_{h,l}\|+ \varepsilon_h),
$$
which, together with the first line of (\ref{ff}), (\ref{epsi}), (\ref{fdecomp}) and Theorem \ref{theolocalerrorfull}, implies the result.

\end{proof}

\section{Numerical experiments}
In this section we will show some numerical experiments which corroborate the previous results. For that, we have considered parabolic problems with homogeneous and non-homogeneous Dirichlet boundary conditions for which $X=L^2(\Omega)$ for a certain spatial domain $\Omega$ and $g\in H^{\frac{1}{2}}(\Omega)$. The fact that these problems can be well fitted under the theory of abstract IBVPs is well justified in \cite{acr2,quarteroniv}. Moreover, other types of boundary conditions can also be considered although we restrict here to Dirichlet boundary conditions just for the sake of brevity.

As for the space discretization, we have considered here both the standard symmetric 2nd-order finite differences and collocation spectral methods in $1$ dimension. For the former, it was already well justified in \cite{acr2} that the hypotheses which are required on the space discretization are satisfied, at least for the discrete $L^2$-norm, $Z=H^4(\Omega)$ and $\varepsilon_h=O(h^2)$. Besides, a discrete maximum principle (hypothesis (HS)) is well-known to apply \cite{S}. With the collocation spectral methods, those hypotheses are also valid with the discrete $L^2$-norm associated to the corresponding Gaussian-Lobatto quadrature rule ($\|\cdot\|_{h,GL}$), $Z=H^m(\Omega)$ and $\varepsilon_h=O(J^{2-m})$ \cite{acj,BM}, where $J+1$ is the number of collocation nodes, which is clearly inversely proportional to the diameter space grid $h$. In such a way, the more regular the functions are, the quicker the numerical solution of the elliptic problems converges to the exact solution.

Besides, although in the collocation case the matrix $A_{h,0}$ is not symmetric any more, Remarks \ref{norma2p} and \ref{norma2} still apply. Notice that, for every matrix $B$ of dimension $(J-1)\times(J-1)$,
\begin{eqnarray}
\|B\|_{h,GL}=\|D_J B D_J^{-1}\|_h
\label{BGL}
\end{eqnarray}
where $D_J$ denotes the diagonal matrix which contains the square root of the coefficients of the quadrature rule corresponding to the interior Gauss-Lobatto nodes $\{ x_j \}_{j=1}^{J-1}$. (We will denote them by $\{\alpha_j\}_{j=1}^{J-1}$.) Because of this, when $D_J B D_J^{-1}$ is symmetric, $\|B\|_{h,GL}=\rho(B)$. The fact that $D_J A_{h,0} D_J^{-1}$ is symmetric comes from the following: Notice that $(A_h)_{i,j}=L_j''(x_i)$ where $\{L_j(x)\}$ are the Lagrange polynomials associated to the interior Gauss-Lobatto nodes and those at the boundary. As $\{L_j(x)\}_{j=1}^{J-1}$ vanish at the boundary, integrating by parts, for every $i,j\in \{1,\dots,J-1\}$,
$$\int L_j''(x) L_i(x) dx=-\int L_j'(x) L_i'(x) dx.$$
As the integrand in the left-hand side is a polynomial of degree $2J-2$, the corresponding Gaussian-Lobatto quadrature rule integrates it exactly. Therefore,
$$
\alpha_i L_j''(x_i)=-\int L_j'(x) L_i'(x)dx= \alpha_j L_i''(x_j),$$
where, for the last equality, the role of $i$ and $j$ has been interchanged. From this, and using (\ref{BGL}) again,
$$\|k A_{h,0} \sum_{r=1}^{n-1} e^{r k A_{h,0}}\|_{h,GL}=\|k D_J A_{h,0} D_J^{-1} e^{k(1-\theta)D_J A_{h,0} D_J^{-1}}\|_h.$$
As $D_J A_{h,0} D_J^{-1}$ is symmetric, the matrix inside $\|\cdot\|_h$ is also symmetric and therefore
$$ \|k A_{h,0} \sum_{r=1}^{n-1} e^{r k A_{h,0}}\|_{h,GL}= \rho(k D_J A_{h,0} D_J^{-1} \sum_{r=1}^{n-1} e^{r k D_J A_{h,0} D_J^{-1}})= \rho (k A_{h,0} \sum_{r=1}^{n-1} e^{r kA_{h,0}}).$$

Secondly, the eigenvalues of $A_{h,0}$ are negative. This is due to the following: For every polynomial which vanishes at the boundary such that $p(x)\not\equiv 0$,
$$\int p''(x) p(x) dx=-\int [p'(x)]^2 dx<0.$$
Considering $p(x)=\sum_{i=1}^{J-1} \beta_i L_i(x)$ and using the Gauss-Lobatto quadrature rule and the definition of Lagrange polynomials,
$$
\sum_{k=1}^{J-1} \alpha_k (\sum_{i=1}^{J-1} \beta_i L_i''(x_k))(\sum_{j=1}^{J-1} \beta_j L_j(x_k))=\sum_{i,k=1}^{J-1} \alpha_k \beta_i \beta_k L_i''(x_k)<0.$$
This can be rewritten as $\vec{\beta}^T D_J^2 A_{h,0} \vec{\beta}<0$ for every vector $\vec{\beta}\neq \vec{0}$, or equivalently, $(D_J\vec{\beta})^T D_J  A_{h,0} D_J^{-1} (D_J \vec{\beta})<0$, which implies that $D_J A_{h,0} D_J^{-1}$ has negative eigenvalues and so has $A_{h,0}$.

For both types of discretizations which have been considered here, $L_h Q_h \partial \equiv 0$ and therefore formulas (\ref{Vh0}) and (\ref{Vhij}) simplify a little bit. However, other possible discretizations (as those considered in \cite{acr2}) are also possible, for which that simplification cannot be made.

In all cases, we have considered the one-dimensional problem
\begin{eqnarray}
u_t(x,t)&=&u_{xx}(x,t)+f(x,t), \quad  0\le t \le 1, \quad 0\le x \le 1,\nonumber \\
u(x,0)&=&u_0(x), \nonumber \\
u(0,t)&=& g_0(t), \quad u(1,t)= g_1(t), \label{pr}
\end{eqnarray}
with the corresponding functions $f$, $u_0$, $g_0$ and $g_1$ which make that $u(x,t)=x(1-x)e^{-t}$ or $u(x,t)=e^{x-t}$ are solutions of the problem.
These functions satisfy regularity hypotheses (\ref{reg}) and (\ref{regs}) for any natural number $p$.

\subsection{Trapezoidal rule}

We begin by considering the trapezoidal rule in time and the second-order finite differences in space. We have considered $h=10^{-3}$ so that the error in space is negligible. The trapezoidal rule corresponds to $s=2$ but is just exact for polynomials of degree $\le 1$. Therefore, one of the hypothesis of Theorem \ref{teorcavbcsp} is not satisfied and we can just apply Theorem \ref{teorcavbc} when discretizating firstly in space and then in time with the solution which satisfies $g_0(t)=g_1(t)=0$. That theorem states that, with respect to the time stepsize $k$, the local and global error should show orders $3$ and $2$ respectively and we can check that really happens in Table \ref{t1}. For the same problem, but applying the technique which is suggested in this paper (\ref{ff}) with $p=2$, Theorems \ref{locps}, \ref{theolocalerrorfull} and \ref{theoglobalerrorfull} state that also the local and global error should show orders $3$ and $2$ respectively and that is what we can in fact observe in the same table.  We can see that, although the local order is a bit more clear with the suggested technique, the size of the errors is slightly bigger with the suggested approach. Therefore, it seems that, in this particular problem, the error constants are bigger with the suggested technique and it is not worth the additional cost of calculating terms which contain $\varphi_3(k A_{h,0})$ and $\varphi_4(k A_{h,0})$.

The comparison is more advantageous for the suggested technique when the solution is such that it does not vanish at the boundary. Then, Theorem \ref{teorcanvbcsp} and Remark \ref{norma2} state that the local and global error should show order $2$ with the classical approach and that can be checked in Table \ref{t2}. However, with the suggested strategy, as with the vanishing boundary conditions case, the theorems in this paper prove local order $3$ and global order $2$, which can again be checked in the same table. The fact that we manage to increase the order in the local error makes that the global errors, although always of order $2$, are smaller with the suggested technique than with the classical approach. Nevertheless, the comparison between both techniques will be more beneficial for the technique which is suggested in the paper when the classical (non-exponential) order of the quadrature rule increases.

\begin{table}[t]
\begin{center}
\begin{tabular}{|c|c|c|c|c|c|c|c|c|}
\hline & \multicolumn{4}{|c|}{Classical approach} & \multicolumn{4}{|c|}{Suggested approach} \\ \hline
 k &  Loc. err. &  ord. & Glob. err. &  ord. & Loc. err. &  ord. & Glob. err. &  ord. \\\hline
 1/10 & 8.0170e-5 & & 5.5395e-5 & & 1.5334e-4 & &9.8091e-5 &   \\ \hline
 1/20 & 1.2961e-5 & 2.6 & 1.3953e-5 & 2.0 & 1.9139e-5 & 3.0 & 1.8575e-5& 2.4\\ \hline
 1/40 & 1.8644e-6 & 2.8 & 3.4952e-6 & 2.0 & 2.3902e-6 & 3.0 & 4.0262e-6 & 2.2 \\ \hline
 1/80 & 2.5316e-7 & 2.9 & 8.7426e-7 & 2.0 & 2.9862e-7 & 3.0 & 9.3773e-7 &  2.1\\ \hline
 1/160 & 3.3354e-8 & 2.9 & 2.1860e-7 & 2.0 & 3.7318e-8 & 3.0 & 2.2635e-7 &  2.0 \\ \hline
 1/320 & 4.3171e-9 & 3.0 & 5.4651e-8 & 2.0 & 4.6641e-9 & 3.0 & 5.5610e-8 &  2.0\\ \hline
\end{tabular}
\caption{Trapezoidal rule, $h=10^{-3}$,  $u(x,t)=x(1-x) e^{-t}$}
\end{center}
\label{t1}
\end{table}

\begin{table}[t]
\begin{center}
\begin{tabular}{|c|c|c|c|c|c|c|c|c|}
\hline & \multicolumn{4}{|c|}{Classical approach} & \multicolumn{4}{|c|}{Suggested approach} \\ \hline
 k &  Loc. err. &  ord.& Glob. err. &  ord. & Loc. err. &  ord. & Glob. err. &  ord. \\\hline
 1/10 &  7.4531e-4 & & 4.8108e-4 & &  2.0144e-4 &  & 1.3742e-4 & \\ \hline
 1/20 & 1.5446e-4 & 2.3 & 1.2476e-4 & 2.0 & 2.8775e-5 & 2.8 & 3.0165e-5 & 2.2\\ \hline
1/40 &  3.3863e-5 & 2.2 & 3.2074e-5 &  2.0 & 3.9084e-6 & 2.9 & 7.0453e-6 & 2.1 \\ \hline
1/80 & 7.3906e-6 & 2.2 & 8.1809e-6 & 2.0 & 5.1475e-7 & 2.9 & 1.6979e-6 & 2.0 \\ \hline
1/160 & 1.5848e-6 & 2.2 & 2.0770e-6 & 2.0 &  6.6122e-8 & 3.0 & 4.1324e-7 & 2.0 \\ \hline
1/320 & 3.3666e-7 & 2.2 & 5.2777e-7 & 2.0 &  8.1531e-9 & 3.0 & 9.8459e-8 & 2.1\\ \hline
\end{tabular}
\caption{Trapezoidal rule, $h=10^{-3}$,  $u(x,t)=e^{x-t}$}
\end{center}
\label{t2}
\end{table}

\subsection{Simpson rule}

In this subsection we consider Simpson rule in time and a collocation spectral method in space with $40$ nodes so that the error in space is negligible. As Simpson rule corresponds to $s=3$ and the interpolatory quadrature rule which is based in those $3$ nodes is exact for polynomials of degree $\le 3$, we can take $p=4$ in Theorem \ref{locps1} and achieve orders $5$ and $4$ for the local and global error respectively with the technique suggested here. However, with the classical approach, at least in the common case that $g(t)\not\equiv 0$, Theorem \ref{teorcanvbcsp}  and Remark \ref{norma2} give just order $3$ for the local and global error. These results can be checked  in Table \ref{t3}. Moreover, the size of the global error, even for the bigger timestepsizes is smaller with the suggested technique.

We also want to remark here that the trapezoidal and Simpson rules correspond to Gauss-Lobatto quadrature rules with $s=2$ and $s=3$ respectively and therefore Corollary \ref{corol} (ii) and Remark \ref{gl} apply.

\begin{table}[t]
\begin{center}
\begin{tabular}{|c|c|c|c|c|c|c|c|c|}
\hline & \multicolumn{4}{|c|}{Classical approach} & \multicolumn{4}{|c|}{Suggested approach} \\ \hline
 k &  Loc. err. &  ord. & Glob. err. &  ord. & Loc. err. &  ord. & Glob. err. &  ord. \\\hline
1/2 & 8.0718e-4 & & 4.9507e-4 & & 2.7496e-4 & &1.6821e-4 & \\ \hline
1/4 & 8.3265e-5 & 3.3 & 4.2862e-5 & 3.5 &8.5778e-6  & 5.0 & 4.4156e-6  & 5.2 \\ \hline
1/8 & 8.6214e-6 & 3.3 & 4.0561e-6  & 3.4 & 2.6785e-7  & 5.0 & 1.5345e-7  & 4.8 \\\hline
1/16 & 1.0500e-6 & 3.0 & 4.4103e-7  & 3.2 & 8.3681e-9  &5.0 & 6.7803e-9  & 4.5 \\ \hline
 1/32 & 1.1622e-7   & 3.2 &  4.6864e-8  & 3.2 &2.6148e-10  &5.0 & 3.5342e-10  & 4.3 \\ \hline
 1/64 & 1.2378e-8 &   3.2 & 4.9423e-9 & 3.2 & 8.1711e-12& 5.0 & 2.0189e-11 & 4.1\\ \hline
 \end{tabular}
\caption{Simpson's rule,  $J=39$, $u(x,t)=e^{x-t}$}
\end{center}
\label{t3}
\end{table}

\subsection{Gaussian rules}

In order to achieve the highest accuracy given a certain number of nodes, we consider in this subsection Gaussian quadrature rules. More precisely, those corresponding to $s=1,2,3,4$. As space discretization, we have considered again the same spectral collocation method of the previous subsection. Following Corollary \ref{corol} (i) and Theorem \ref{theolocalerrorfull}, even for non-vanishing boundary conditions, taking $p=2s$ in (\ref{Vh0}) and (\ref{Vhij}) the local error in time should show order $2s+1$  and the global error, using Theorem \ref{theoglobalerrorfull}, order $2s$. This should be compared with the order
$s+1$ which is proved for the classical approach when  $g(t)\equiv 0$ in Theorem \ref{teorcavbcsp} and the order
 $s$ for the local and global error when $g(t)\not \equiv 0$, which comes from Theorem \ref{teorcanvbcsp} and Remark \ref{norma2}.
 In Tables \ref{t4} and \ref{t5} we see the results which correspond to $s=1$ and $s=2$ respectively for the vanishing boundary conditions case. Although for $s=1$ there is not an improvement on the global order for the suggested technique, the errors are a bit smaller. Of course the benefits are more evident with $s=2$. For the non-vanishing boundary conditions case, Tables \ref{t6},\ref{t7},\ref{t8} and \ref{t9} show the results which correspond to $s=1,2,3$ and $4$ respectively. When avoiding order reduction, the results are much better than with the classical approach. Not only the order is bigger but also the size of the errors is smaller from the very beginning. We notice that the global order is even a bit better than expected for the first values of $k$.

\begin{table}[t]
\begin{center}
\begin{tabular}{|c|c|c|c|c|c|c|c|c|}
\hline & \multicolumn{4}{|c|}{Classical approach} & \multicolumn{4}{|c|}{Suggested approach} \\ \hline
 k &  Loc. err. &  ord. & Glob. err. &  ord. & Loc. err. &  ord. & Glob. err. &  ord. \\\hline
 1/4 & 8.2639e-3 &  & 4.3743e-3 &  & 4.9483e-3 &  & 2.5685e-3 &  \\ \hline
  1/8 & 1.8874e-3 & 2.1 & 1.1548e-3 & 1.9 & 6.4427e-4 & 2.9 & 3.7820e-4 & 2.8 \\ \hline
   1/16 & 3.6716e-4 & 2.4 & 2.9791e-4 & 2.0 & 8.2199e-5 & 3.0 & 6.9202e-5 & 2.4 \\ \hline
    1/32 & 7.6916e-5 & 2.2 & 7.6389e-5 &  2.0 & 1.0381e-5 & 3.0 & 1.4677e-5 & 2.2 \\ \hline
     1/64 & 1.6803e-5 & 2.2 & 1.9445e-5 & 2.0 & 1.3043e-6 & 3.0 & 3.3796e-6 & 2.1 \\ \hline
      1/128 & 3.6111e-6 & 2.2 & 4.9232e-6 & 2.0 & 1.6345e-7 & 3.0 & 8.1115e-7 & 2.1
      \\ \hline
\end{tabular}
\caption{Midpoint rule, $J=39$, $u(x,t)=x(1-x)e^{-t}$}
\end{center}
\label{t4}
\end{table}

\begin{table}[t]
\begin{center}
\begin{tabular}{|c|c|c|c|c|c|c|c|c|}
\hline & \multicolumn{4}{|c|}{Classical approach} & \multicolumn{4}{|c|}{Suggested approach} \\ \hline
 k &  Loc. err. &  ord. & Glob. err. &  ord. & Loc. err. &  ord. & Glob. err. &  ord. \\\hline
 1/2 &  8.9292e-4 & & 5.4788e-4 & &  8.2591e-5 & & 5.0573e-5 & \\\hline
 1/4 & 8.6746e-5 & 3.4 & 4.5296e-5 & 3.6 & 2.8465e-6 & 4.9 & 1.4782e-6 & 5.1 \\\hline
 1/8 & 8.1186e-6 & 3.4 & 3.9933e-6 & 3.5 & 9.3457e-8 & 4.9 & 5.4928e-8 & 4.7   \\\hline
 1/16 & 1.0237e-6 & 3.0 & 4.3449e-7 & 3.2 & 2.9938e-9 & 5.0 & 2.5254e-9 & 4.4 \\\hline
 1/32 & 1.1415e-7 & 3.2 & 4.6048e-8 & 3.2 & 9.4726e-11 & 5.0 &1.3424e-10 & 4.2 \\ \hline
 1/64 & 1.2112e-8 & 3.2 & 4.8418e-9 & 3.2 & 2.9786e-12 & 5.0 & 7.7190e-12 & 4.1 \\ \hline
\end{tabular}
\caption{Gaussian rule with $s=2$, $J=39$, $u(x,t)=x(1-x)e^{-t}$}
\end{center}
\label{t5}
\end{table}

\begin{table}[t]
\begin{center}
\begin{tabular}{|c|c|c|c|c|c|c|c|c|}
\hline & \multicolumn{4}{|c|}{Classical approach} & \multicolumn{4}{|c|}{Suggested approach} \\ \hline
 k &  Loc. err. &  ord. & Glob. err. &  ord. & Loc. err. &  ord. & Glob. err. &  ord. \\\hline
1/8 & 6.6985e-2  & &  2.8814e-2  & & 1.5650e-3 &  & 8.6254e-4 & \\ \hline
 1/16 & 3.0444e-2 & 1.1 & 1.2167e-2      & 1.2 & 1.8673e-4  & 3.1 & 1.4048e-4 & 2.6 \\ \hline
 1/32 & 1.3218e-2 & 1.2 & 5.1128e-3      & 1.2 & 2.2356e-5   & 3.1 & 2.7661e-5 & 2.3 \\ \hline
 1/64 & 5.6269e-3 &  1.2 & 2.1472e-3 & 1.2 & 2.6947e-6 & 3.1 & 6.1319e-6 & 2.2 \\ \hline
 1/128 & 2.3791e-3 &   1.2 & 9.0138e-4 &  1.2 & 3.2718e-7  & 3.0 & 1.4450e-6 & 2.1 \\ \hline
 1/256 & 1.0015e-3 &  1.2 & 3.7813e-4  &  1.2 & 3.9993e-8 & 3.0 & 3.5085e-7 & 2.0\\ \hline
 \end{tabular}
\caption{Midpoint rule, $J=39$, $u(x,t)=e^{x-t}$}
\end{center}
\label{t6}
\end{table}

\begin{table}[t]
\begin{center}
\begin{tabular}{|c|c|c|c|c|c|c|c|c|}
\hline & \multicolumn{4}{|c|}{Classical approach} & \multicolumn{4}{|c|}{Suggested approach} \\ \hline
 k &  Loc. err. &  ord. & Glob. err. &  ord. & Loc. err. &  ord. & Glob. err. &  ord. \\\hline
1/2 & 1.9328e-2 & & 1.1743e-2 & & 7.9408e-4 & & 4.8503e-4 & \\\hline
1/4 & 4.8715e-3   & 2.0 & 2.3068e-3 & 2.3 & 2.2565e-5   & 5.1 & 1.1356e-5 & 5.4 \\\hline
 1/8 & 1.1460e-3  & 2.1 & 4.7811e-4 & 2.3 & 6.3799e-7   & 5.1 & 3.3399e-7 & 5.1 \\\hline
  1/16 & 2.5199e-4 & 2.2 & 9.8750e-5 & 2.3 & 1.8167e-8   & 5.1 & 1.2423ee-8 & 4.7 \\\hline
   1/32 & 5.4061e-5  & 2.2 & 2.0543e-5 & 2.3 & 5.2222e-10   & 5.1 &  5.7608e-10 & 4.4 \\\hline
    1/64 & 1.1476e-5  & 2.2 &  4.2930e-6 & 2.3 & 1.4918e-11  & 5.1 &  2.9580e-11 & 4.3\\\hline
\end{tabular}
\caption{Gaussian rule with $s=2$, $J=39$, $u(x,t)=e^{x-t}$}
 \end{center}
\label{t7}
\end{table}

\begin{table}[t]
\begin{center}
\begin{tabular}{|c|c|c|c|c|c|c|c|c|}
\hline & \multicolumn{4}{|c|}{Classical approach} & \multicolumn{4}{|c|}{Suggested approach} \\ \hline
 k &  Loc. err. &  ord. & Glob. err. &  ord. & Loc. err. &  ord. & Glob. err. &  ord. \\\hline
1/2 & 8.4732e-4 & & 5.1405e-4 & & 3.0107e-6 & & 1.8363e-6 & \\ \hline
1/4 &  1.0734e-4   & 3.0 & 5.0710e-5  & 3.3 & 2.0423e-8    & 7.2 & 1.0082e-8  & 7.5  \\ \hline
1/8 & 1.2260 e-5  & 3.1 & 5.1107e-6 & 3.3 & 1.3840e-10  & 7.2 & 6.6752e-11 & 7.2 \\ \hline
1/16 & 1.3379e-6  & 3.2 & 5.2398e-7  & 3.3 & 9.4087e-13  & 7.2 & 5.3198e-13 & 7.0 \\ \hline
1/32 & 1.4335e-7 & 3.2 & 5.4413e-8  & 3.3 & 6.4275e-15  & 7.2 & 5.3534e-15  & 6.6 \\ \hline
1/64 & 1.5182e-8 & 3.2 & 5.6735e-9 &  3.3 & 4.4259e-17 & 7.2 & 6.6517e-17 & 6.3 \\ \hline
\end{tabular}
\caption{Gaussian rule with $s=3$, $J=39$, $u(x,t)=e^{x-t}$}
 \end{center}
\label{t8}
\end{table}

\begin{table}[t]
\begin{center}
\begin{tabular}{|c|c|c|c|c|c|c|c|c|}
\hline & \multicolumn{4}{|c|}{Classical approach} & \multicolumn{4}{|c|}{Suggested approach} \\ \hline
 k &  Loc. err. &  ord. & Glob. err. &  ord. & Loc. err. &  ord. & Glob. err. &  ord. \\\hline
 1/2 & 2.7746e-5 & & 1.6829e-5 & & 8.1253e-9 & & 4.9495e-9 &  \\\hline
  1/4 & 1.7137e-6 & 4.0 & 8.0951e-7  & 4.4 & 1.3502e-11  & 9.2 & 6.5604e-12  & 9.6 \\\hline
   1/8 & 9.6826e-8  & 4.1 & 4.0363e-8 & 4.3 & 2.2443e-14  & 9.2 & 1.0107e-14  & 9.3  \\\hline
    1/16 & 5.2833e-9 & 4.2 & 2.0690e-9  & 4.3 & 3.7267e-17  &  9.2 & 1.7380e-17 & 9.2 \\\hline
     1/32 &  2.8240e-10 & 4.2 & 1.0719e-10 & 4.3 & 6.2266e-20 & 9.2 & 3.5644e-20 & 8.9 \\\hline
\end{tabular}
\caption{Gaussian rule with $s=4$, $J=39$,  $u(x,t)=e^{x-t}$}
 \end{center}
\label{t9}
\end{table}

Finally, although it is not an aim of this paper, in order to compare roughly the results in terms of computational cost, let us concentrate on Gaussian quadrature rules of the same order $2s$ when integrating a non-vanishing boundary value problem. When considering $2s$ nodes with the classical approach, $2s$ evaluations of the source term $f$ must be made at each step and the $2s$ operators $\{ \varphi_j(k A_{h,0}) \}_{j=1}^{2s}$ are needed, which will be multiplied by vectors with all its components varying in principle at each step. However, with the suggested technique and $s$ nodes, just $s$ evaluations of the source term $f$ must be made although $3s$ operators $\{\varphi_j(k A_{h,0})\}_{j=1}^{3s}$ are needed. Nevertheless, from these $3s$, just the first $s$ of them are multiplied by vectors which change independently in all their components at each step. The other $2s$ are multiplied by vectors which just contain information on the boundary. Therefore, with finite differences many components vanish and, with Gauss-Lobatto spectral methods, those vectors are just a time-dependent linear combination of two vectors which do not change with time. With Gauss-Lobatto methods, as $A_{h,0}$ is not sparse but its size its moderate, we have calculated once and for all at the very beginning $e^{k A_{h,0}}$, $\varphi_j(k A_{h,0})$, $j=1,\dots,s$ and the two necessary vectors derived from $\varphi_j(k A_{h,0})$, $j=s+1,\dots,3s$. Then, in (\ref{ff}) the terms containing the former at each step require $O(J^2)$ operations while the terms containing the latter just require $O(J)$ operations. With finite differences, as the matrix $A_{h,0}$ is sparse and usually bigger, we have applied general Krylov subroutines \cite{niesen} to calculate all the required terms at each step.

We offer a particular comparison for order $2$ with Gauss-Lobatto spectral space discretization on the one hand and 2nd-order finite differences on the other, and considering the implementation described above in each case. In Figure \ref{f1} we can see that, for the former, the suggested technique is more than twice cheaper than the classical one and, with the latter in Figure \ref{f2}, the comparison is not so advantageous for the suggested technique but it is still cheaper than the classical approach. We also remark that, in any case, the more expensive the source function $f$ is to evaluate, the more advantageous the suggested technique with $s$ nodes will be against the classical approach with $2s$ nodes.

\begin{figure}
\begin{center}
\epsfig{file=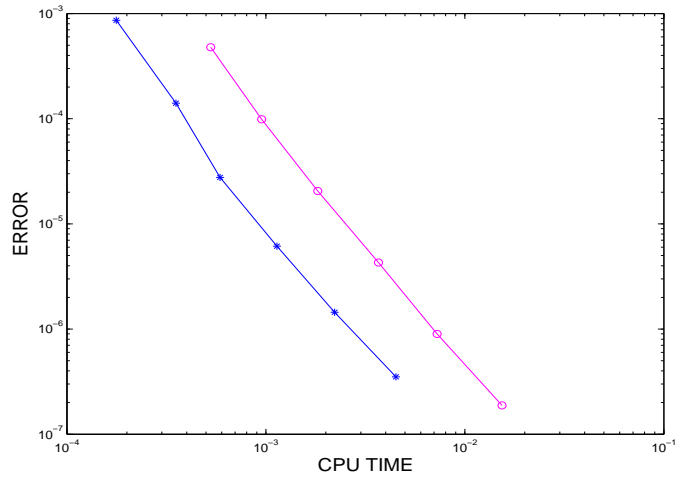,height=2.5in,width=3.5in}
\end{center}
\caption{Error against CPU time when integrating problem (\ref{pr}) with exact solution $u(x,t)=e^{x-t}$, using Gauss-Lobatto spectral method in space and, in time,
the classical approach of Gaussian rule with $s=2$ (pink circles) or the suggested technique for midpoint rule (blue asterisks)  }
\label{f1}
\end{figure}

\begin{figure}
\begin{center}
\epsfig{file=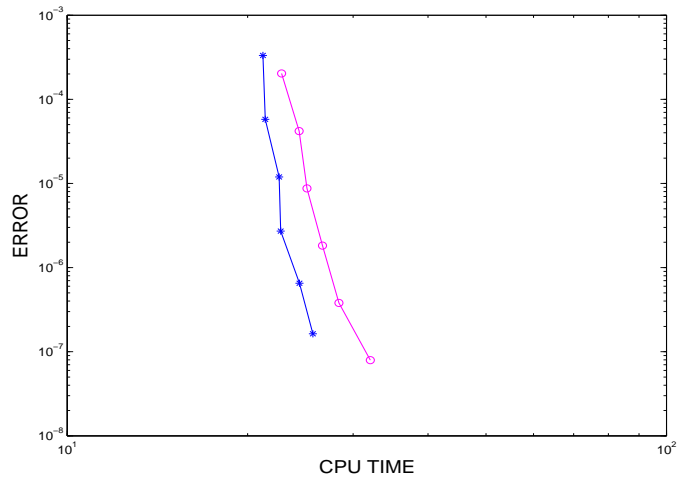,height=2.5in,width=3.5in}
\end{center}
\caption{Error against CPU time when integrating problem (\ref{pr}) with exact solution $u(x,t)=e^{x-t}$, using second-order finite differences in space and, in time,
the classical approach of Gaussian rule with $s=2$ (pink circles) or the suggested technique for midpoint rule (blue asterisks)  }
\label{f2}
\end{figure}



\bibliographystyle{siamplain}

\end{document}